\newtheorem{theorem}{Theorem}[section]
\newtheorem{lemma}{Lemma}[section]
\newtheorem{proposition}{Proposition}[section]
\newtheorem{definition}{Definition}[section]
\newtheorem{remark}{Remark}[section]
\newcommand{\bal}{\begin{align}}
\newcommand{\bbal}{\begin{align*}}
\newcommand{\beq}{\begin{equation}}
\newcommand{\eeq}{\end{equation}}
\newcommand{\bca}{\begin{cases}}
\newcommand{\eca}{\end{cases}}
\newcommand{\pa}{\partial}
\newcommand{\fr}{\frac}
\newcommand{\na}{\nabla}
\newcommand{\De}{\Delta}
\newcommand{\cd}{\cdot}
\newcommand{\ep}{\varepsilon}
\newcommand{\dd}{\mathrm{d}}
\newcommand{\R}{\mathbb{R}}
\newcommand{\T}{\mathbb{T}}
\newcommand{\f}{\left}
\newcommand{\g}{\right}
\begin{document}

\title{Non-convergence of the Navier-Stokes equations toward the Euler equations in the endpoint Besov spaces}

\author{Yanghai Yu$^{1,}\footnote{E-mail: yuyanghai214@sina.com(Corresponding author); lijinlu@gnnu.edu.cn}$ and Jinlu Li$^{2}$\\
\small $^1$ School of Mathematics and Statistics, Anhui Normal University, Wuhu 241002, China\\
\small $^2$ School of Mathematics and Computer Sciences, Gannan Normal University, Ganzhou 341000, China}

\date{\today}
\maketitle
\begin{abstract}
In this paper, we consider the inviscid limit problem to the higher dimensional incompressible Navier-Stokes equations in the whole space. It was proved in \cite[J. Funct. Anal., 276 (2019)]{GZ} that given initial data $u_0\in B^{s}_{p,r}$ with $1\leq r<\infty$, the solution of the Navier-Stokes equations converges strongly in $B^{s}_{p,r}$ to the solution of the Euler equations as the viscosity parameter tends to zero. In the case when $r=\infty$, we prove the failure of the $B^{s}_{p,\infty}$-convergence of the Navier-Stokes equations toward the Euler equations in the inviscid limit.
\end{abstract}

{\bf Keywords:} Navier-Stokes equations, Euler equations, Inviscid limit, Besov spaces.

{\bf MSC (2010):} 35Q30, 76B03.
\vskip0mm

\section{Introduction}
The motion of an incompressible viscous fluid with constant density is governed
by the following Navier-Stokes equations
\begin{align}\label{1}\tag{NS}
\begin{cases}
\pa_t u+u\cdot \nabla u-\ep \Delta u+\nabla P=0,\\
\mathrm{div\,} u=0,\\
u(0,x)=u_0(x),
\end{cases}
\end{align}
where $\ep>0$ denotes the viscosity of the fluid, the vector field $u(t,x):[0,\infty)\times {\mathbb R}^d\to {\mathbb R}^d$ stands for the velocity of the fluid, the quantity $P(t,x):[0,\infty)\times {\mathbb R}^d\to {\mathbb R}$ denotes the scalar pressure, and $\mathrm{div\,} u=0$ means that the fluid is incompressible.

When the viscocity vanishes ($\ep=0$), the Navier-Stokes equations \eqref{1} reduces to the Euler equations for ideal incompressible fluid
\begin{align}\label{2}\tag{E}
\begin{cases}
\pa_t u+u\cdot \nabla u+\nabla P=0, \\
\mathrm{div\,} u=0,\\
u(0,x)=u_0(x).
\end{cases}
\end{align}
The mathematical study of both the Navier-Stokes equations \eqref{1} and Euler equations \eqref{2} has a long and distinguished history (see Constantin's survey \cite{P} for more details). We do not detail the literature since it is huge and refer the readers to see the monographs of Majda-Bertozzi \cite{Majda01} and Bahouri-Chemin-Danchin \cite{B.C.D} for the well-posedness results of both Navier-Stokes and Euler equations in Sobolev and Besov spaces respectively.

A classical problem in fluid mechanics is the approximation in the limit $\ep\to0$ of vanishing viscosity (also called {\it inviscid limit} or {\it zero-viscosity limit}) of solution of the Euler equations by solution of the incompressible Navier-Stokes equations, which is naturally associated with the physical phenomena of turbulence
 and of boundary layers. The vanishing viscosity problem is closely related to uniqueness of solution to the
Euler equations, because the methods used to prove uniqueness can often be applied
to show vanishing viscosity. One of the most important uniqueness results in $\R^2$ is due to Yudovich \cite{Yu} and the vanishing viscosity results is due to Chemin \cite{chem}. As mentioned in Kelliher \cite{K}, given the uniqueness of solution to the Euler equations, the compactness argument would imply the vanishing viscosity limit. In the presence of boundaries, the zero-viscosity limit problem is more complicated and essentially difficult as it is closely related to the validity of the Prandtl equation for the formation of boundary layers. We refer to see the recent results by Maekawa \cite{14cpam}, Constantin, Kukavica and Vicol \cite{CKV,CV}, G\'{e}rard et al. \cite{18jmpa} and the references therein. For initial data that is analytic only close to the boundary of the domain, Kukavica, Vicol and Wang \cite{Kuk1,Kuk} proved that the solution of the Navier-Stokes equations converges in the vanishing viscosity limit to the solution of the Euler equations.

Most of the progress has been made in the two-dimensional case without boundary (assuming the domain to be the whole plane $\R^2$ of the torus $\T^2$) since both vortex stretching and boundary effects are absent. In particular, the problem of the convergence of smooth viscous solution of the Navier-Stokes equations to the Eulerian one as $\ep\to0$ is
well understood and has been studied in many literatures, see for example \cite{C17,Swann, Kato}. Majda \cite{Majda} showed that under the assumption $u_0\in H^s$ with $s>\frac{d}{2}+2$, the solution to \eqref{1} converges in $L^2$-norm as $\ep\to 0$ to the unique solution to \eqref{2} and the convergence rate is of order $(\ep t)^{\fr12}$. Masmoudi \cite{M} improved the result and proved the convergence in $H^s$-norm under the weaker assumption $u_0\in H^s$ with $s>\frac{d}{2}+1$. Next we shall recall the consideration of Yudovich solution. Yudovich \cite{Yu} established the uniqueness of solution to the Euler equations in the space $C\left(\mathbb{R}^{+}; L^2(\mathbb{R}^2)\right) \times L_{\text {loc }}^{\infty}\left(\mathbb{R}^{+} ; L^2(\mathbb{R}^2)\right)$ assuming that initial data lies to $L^2(\mathbb{R}^2)$ and initial vorticity belongs to $L^p(\mathbb{R}^2) \cap L^{\infty}(\mathbb{R}^2)$ for some $p<\infty$. For this uniqueness class, Chemin \cite{chem} showed that solution of the Navier-Stokes equations with initial vorticity liying in $L^2(\R^2)\cap L^\infty(\R^2)$ converges in the zero-viscosity limit to a solution of the Euler equations. Kelliher \cite{K} resolved the same inviscid limit as Chemin \cite{chem} with the assumption that the vorticity is unbounded. We also mention that, in the case of that initial velocity has finite energy and initial vorticity is bounded in the plane, Cozzi \cite{co1,co2} showed that the unique solution of the Navier-Stokes equations converges to the unique
solution of the Euler equations in the $L^\infty$-norm uniformly over finite time as viscosity approaches zero.
In the case of two dimensional (torus or the whole space) and rough initial data, by taking greater advantage of vorticity formulation, more beautiful results were obtained quantitatively (see \cite{Be,co3,Ci,P2,K,lop} and the references therein).

Next, we review some results on the inviscid limit under the framework of Besov spaces. In dimension two, Hmidi and Kerrani \cite{HK} proved that \eqref{1} is globally well-posed in Besov space $B^2_{2,1}$, with uniform bounds on the viscosity and obtained that the convergence rate of the inviscid limit is of order $\ep t$ for
vanishing viscosity. Subsequently, in \cite{HK1}, they further generalized to other Besov spaces $B^{2/p+1}_{p,1}$ with convergence in $L^p$. For more details, we also refer to see \cite[Section 3.4]{MWZ}. In dimension three, a similar result was obtained in \cite{Wu} for axis-symmetric flows without swirl.
Bourgain and Li \cite{B1,B2} employed a combination of Lagrangian and Eulerian techniques to obtain strong local ill-posedness results in borderline Besov spaces $B^{d/p+1}_{p,r}$ for $1\leq p<\infty$ and $1<r\leq\infty$ when $d=2,3$. Guo-Li-Yin \cite{GZ} solved the inviscid limit in the same topology. Precisely speaking, they obtained
\begin{theorem}[\cite{GZ}]\label{th1} Let $d\geq 2$ and $\ep\in [0,1]$. Assume that $(s,p,r)$ satisfies
\begin{align*}
&s>\frac{d}{p}+1,\; (p,r)\in [1,\infty]\times [1,\infty)
 \quad \mbox{   or   } \\
&s=\frac{d}{p}+1, \;(p,r)\in [1,\infty]\times\{1\}.
\end{align*}
Given initial data $u_0\in B^{s}_{p,r}(\R^d)$, there exists $T=T(s,p,r,d)>0$ such that \eqref{1} has a unique solution $u^{\rm NS}_{\ep}(t,u_0)\in C([0,T];B^{s}_{p,r})$. Moreover, there holds\\
{(1) (Uniform bounds):}  there exists $C=C(s,p,r,d)>0$ such that
\begin{align*}
\f\|u^{\rm NS}_{\ep}(t,u_0)\g\|_{L_T^\infty B^{s}_{p,r}}\leq C, \quad \forall\ \ep\in [0,1].
\end{align*}
Moreover, if $u_0\in B^{\gamma}_{p,r}$ with $\gamma>s$, there exists $C_2=C_2(\gamma,s,p,r,d)>0$ such that
\begin{align*}
\f\|u^{\rm NS}_{\ep}(t,u_0)\g\|_{L_T^\infty B^\gamma_{p,r}}\leq  C_2\|u_0\|_{B^\gamma_{p,r}}.
\end{align*}
{(2) (Inviscid limit):} $\lim\limits_{\ep\downarrow 0}\f\|u^{\rm NS}_{\ep}(t,u_0)-u^{\rm E}(t,u_0)\g\|_{L_T^\infty B^{s}_{p,r}}=0,$ namely, for any $u_0\in B^{s}_{p,r}$, $\forall \eta>0$, $\exists\ \ep_0=\ep_0(u_0,\eta,T)$, $\forall \ep\leq\ep_0$, such that $$\f\|u^{\rm NS}_{\ep}(t,u_0)-u^{\rm E}(t,u_0)\g\|_{L_T^\infty B^{s}_{p,r}}\leq \eta.$$
\end{theorem}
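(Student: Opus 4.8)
The plan is to treat \eqref{1} as a transport--diffusion system and to base every assertion on a single \emph{a priori} estimate in $B^s_{p,r}$ that is uniform with respect to $\ep\in[0,1]$. Applying the Leray projector $\mathbb{P}=\mathrm{Id}-\n\De^{-1}\Div$ to the momentum equation eliminates the pressure and recasts \eqref{1} as $\pa_t u-\ep\De u+\mathbb{P}(u\cd\n u)=0$, so that the dissipation enters with a favorable sign. Localizing with the Littlewood--Paley blocks $\De_j$ and using the standard commutator estimate for $[\De_j,u\cd\n]$ together with the smoothing of the heat semigroup, I would derive
\begin{align*}
\|u_\ep(t)\|_{B^s_{p,r}}\les \|u_0\|_{B^s_{p,r}}\exp\Big(C\int_0^t\|\n u_\ep(\tau)\|_{L^\infty}\,\dd\tau\Big),
\end{align*}
with $C$ independent of $\ep$; here the dissipative contribution is a nonnegative quantity that is simply discarded, which is precisely why the bound degenerates gracefully to the Euler case $\ep=0$.

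Because the hypotheses on $(s,p,r)$ guarantee the embedding $B^s_{p,r}\hookrightarrow W^{1,\infty}$, one has $\|\n u_\ep\|_{L^\infty}\les\|u_\ep\|_{B^s_{p,r}}$, which closes the estimate and yields a lifespan $T=T(s,p,r,d)>0$ together with a bound $\|u_\ep\|_{L^\infty_TB^s_{p,r}}\le C$, both \emph{independent of} $\ep$; this is exactly the uniform bound (1). Existence and uniqueness on $[0,T]$ then follow from a Friedrichs mollification scheme whose approximate solutions obey the same $\ep$-uniform bound, passing to the limit by compactness and checking uniqueness through an energy estimate on the difference of two solutions taken one order of regularity below $s$. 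Propagation of higher regularity, $u_0\in B^\ga_{p,r}$ with $\ga>s$, is obtained by running the same estimate at level $\ga$, the factor $\int_0^t\|\n u_\ep\|_{L^\infty}$ being already controlled by the $B^s$-bound, which gives $\|u_\ep\|_{L^\infty_TB^\ga_{p,r}}\le C_2\|u_0\|_{B^\ga_{p,r}}$.

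For the inviscid limit (2), I set $w=u_\ep-u^{\rm E}$; subtracting \eqref{2} from \eqref{1} produces a transport--diffusion equation for $w$ driven by $u_\ep$,
\begin{align*}
\pa_t w-\ep\De w+\mathbb{P}(u_\ep\cd\n w)=-\mathbb{P}(w\cd\n u^{\rm E})+\ep\De u^{\rm E},\qquad w|_{t=0}=0.
\end{align*}
The essential difficulty, and the main obstacle, is the forcing $\ep\De u^{\rm E}$: estimating $w$ at regularity $B^s_{p,r}$ would cost two derivatives on $u^{\rm E}$, which are not available from the $B^s$ theory alone. The remedy is to measure $w$ one notch lower, in $B^{s-1}_{p,r}$, where the same a priori estimate gives $\|w\|_{L^\infty_TB^{s-1}_{p,r}}\les \ep\,\|u^{\rm E}\|_{L^\infty_TB^{s+1}_{p,r}}$, the transport and $w\cd\n u^{\rm E}$ terms being absorbed by Gronwall. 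For data $u_0\in B^{s+1}_{p,r}$ the right-hand side is then $O(\ep)\to0$.

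It remains to upgrade this low-norm convergence to the endpoint $B^s_{p,r}$ and to remove the extra regularity. Interpolating between the vanishing $B^{s-1}$-norm and the uniformly bounded $B^{s+1}$-norm, via $\|w\|_{B^s}\les\|w\|_{B^{s-1}}^{1/2}\|w\|_{B^{s+1}}^{1/2}$, yields $\|w\|_{L^\infty_TB^s_{p,r}}\to0$ for regular data. Finally, for general $u_0\in B^s_{p,r}$ one approximates $u_0$ by regular data $u_0^n$ and splits $u_\ep(u_0)-u^{\rm E}(u_0)$ by the triangle inequality, controlling the approximation errors through the $\ep$-uniform bounds of (1) and the middle term by the convergence just established; choosing $n$ large and then $\ep$ small concludes. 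Here the hypothesis $r<\infty$ is indispensable, as it is precisely the condition guaranteeing that smooth functions are dense in $B^s_{p,r}$, so that $\|u_0-u_0^n\|_{B^s_{p,r}}\to0$. This density step is the crux, and its failure for $r=\infty$ is exactly the gap that the present paper exploits to disprove convergence.
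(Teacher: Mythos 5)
This paper does not actually prove Theorem \ref{th1}: it is quoted from \cite{GZ}, so the comparison is with the argument there (and in \cite{B.C.D}, Chapter 7). Your outline follows the same standard route: Leray projection, an $\ep$-uniform transport--diffusion estimate in which the dissipation is discarded, closure via $B^{s}_{p,r}\hookrightarrow W^{1,\infty}$, propagation of higher regularity, an $O(\ep)$ (or $O((\ep t)^{1/2})$) estimate for $u_\ep-u^{\rm E}$ one derivative below $s$, and an upgrade using $r<\infty$. Two small imprecisions: in the borderline case $s=\frac{d}{p}+1$, $r=1$ the Gronwall factor must involve $\|\nabla u_\ep\|_{B^{d/p}_{p,1}}$ rather than merely $\|\nabla u_\ep\|_{L^\infty}$ (this does not change the structure, since that norm is still controlled by $\|u_\ep\|_{B^{s}_{p,1}}$); and for $p=\infty$ one cannot invoke density of smooth compactly supported functions in $B^{s}_{\infty,r}$ --- the correct statement is that the low-frequency truncations $S_Nu_0$ converge to $u_0$ in $B^{s}_{p,r}$, which holds precisely when $r<\infty$.

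The one step that does not work as written is the final triangle inequality. You split $u_\ep(u_0)-u^{\rm E}(u_0)$ using regularized data $u_0^n$ and assert that the two approximation errors $\|u_\ep(u_0)-u_\ep(u_0^n)\|_{B^{s}_{p,r}}$ and $\|u^{\rm E}(u_0)-u^{\rm E}(u_0^n)\|_{B^{s}_{p,r}}$ are ``controlled through the $\ep$-uniform bounds of (1)''. Uniform bounds only make these terms $O(1)$, not $o(1)$ as $n\to\infty$: what is needed is continuity of the data-to-solution map in the top norm, \emph{uniformly in} $\ep$, and that is exactly the delicate point (the flow map here is continuous but not uniformly continuous, so this cannot be waved through). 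You also cannot interpolate these terms, since $u_\ep(u_0)\notin B^{s+1}_{p,r}$. The standard repair (Bona--Smith type, as in \cite{GZ} and \cite{B.C.D}) is: (i) a stability estimate one derivative lower, $\sup_{\ep}\|u_\ep(u_0)-u_\ep(S_Mu_0)\|_{L^\infty_TB^{s-1}_{p,r}}\lesssim\|u_0-S_Mu_0\|_{B^{s-1}_{p,r}}$; (ii) uniform smallness of the high-frequency tails, $\sup_{\ep,\,t\le T}\bigl(\sum_{j\ge N}2^{jsr}\|\Delta_ju_\ep(t)\|_{L^p}^r\bigr)^{1/r}\to0$ as $N\to\infty$, obtained by combining (i) with the propagated $B^{s+1}_{p,r}$ bound for $u_\ep(S_Mu_0)$; then one concludes from $\|w\|_{B^{s}_{p,r}}\le C2^{N}\|w\|_{B^{s-1}_{p,r}}+(\text{tails})$ applied to $w=u_\ep(u_0)-u^{\rm E}(u_0)$. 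It is in step (ii), not in a naive density argument, that the hypothesis $r<\infty$ is genuinely used; with this replacement your proof is complete and coincides with the cited one.
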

However, it should be noted that we left an open problem for the case $r=\infty$ in the inviscid limit of the Navie-Stokes equations in the endpoint Besov spaces $B^{s}_{p,\infty}$. Notice that again, the statement (2) is a consequence of the strong convergence of the solution to \eqref{1} with the given initial data belonging in $B^{s}_{p,r}$ with $1\leq r<\infty$.
An interesting {\bf Question} appears:

Given initial data $u_0\in B^{s}_{p,\infty}$, in the inviscid limit $\ep\to0$, whether or not the $B^{s}_{p,\infty}$-convergence
\begin{align*}
u^{\rm NS}_{\ep}(t,u_0)\rightarrow u^{\rm E}(t,u_0)\quad\text{strongly in}  \quad L_T^\infty B^{s}_{p,\infty}
\end{align*}
can be established?

In this paper, we will give a negative answer: By constructing a particular initial data $u_0\in B^{s}_{p,\infty}$, we find that
\begin{align*}
u^{\rm NS}_{\ep}(t,u_0)\nrightarrow u^{\rm E}(t,u_0)\quad\text{strongly in} \quad L_T^\infty B^{s}_{p,\infty}.
\end{align*}

For any $R>0$, from now on, we denote any bounded subset $U_R$ in $B^{s}_{p,\infty}(\mathbb{R}^d)$ by
$$U_R:=\left\{\phi\in B^{s}_{p,\infty}(\mathbb{R}^d): \|\phi\|_{B^{s}_{p,\infty}(\mathbb{R}^d)}\leq R,\;\mathrm{div\,} \phi=0\right\}$$
and also
 \begin{align*}
&u^{\rm E}(t,u_0)=\text{\rm the solution map of}\, \eqref{1}\, \text{\rm with initial data}\, u_0,\\
&u^{\rm NS}_{\ep}(t,u_0)=\text{\rm the solution map of}\, \eqref{2}\, \text{\rm with initial data}\, u_0.
 \end{align*}
Now we state the main result.
\begin{theorem}\label{th2} Let $d\geq 2$ and $\ep\in [0,1]$. Assume that $(s,p)$ satisfies
\begin{align}\label{con1}
s>\frac{d}{p}+1\quad\text{with}\quad p\in [1,\infty].
\end{align}
Given initial data $u_0\in U_R\subseteq B^{s}_{p,\infty}(\mathbb{R}^d)$, then \\
{(1)} there exists a positive time $T_1$ such that \eqref{2} has a unique solution $u^{\rm E}(t,u_0)\in L^\infty_{T_1}B^{s}_{p,\infty}$;\\
{(2)} for all $\ep>0$, there exists a positive time $T_2\geq T_1$ such that \eqref{1} has a unique solution $u^{\rm NS}_{\ep}(t,u_0) \in L^\infty_{T_2}B^{s}_{p,\infty}$;\\
{(3) \bf (Non-convergence)}: there exists an initial data $u_0\in B^{s}_{p,\infty}(\mathbb{R}^d)$ such that
the solution $u^{\rm NS}_{\ep}(t,u_0)$ of \eqref{1} does not converge to the solution $u^{\rm E}(t,u_0)$ of \eqref{2} for small $t\in(0,T_1]$ in $B^{s}_{p,\infty}(\R^d)$ as $\ep\downarrow 0$.
More precisely, for all $\ep>0$, there exists an initial data $u_0\in B^{s}_{p,\infty}(\mathbb{R}^d)$ and a small enough $\delta>0$ such that for a short time $t=\delta \varepsilon$
\begin{align}\label{impor}
\left\|u^{\rm NS}_{\ep}(t,u_0)-u^{\rm E}(t,u_0)\right\|_{B^{s}_{p,\infty}}\geq \eta_0,
\end{align}
with some positive constant $\eta_0$ depending on $d$ and $\delta$ but independent of $\ep$.
\end{theorem}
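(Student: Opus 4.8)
\emph{Proof proposal.}

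Parts (1) and (2) are local well-posedness statements that are insensitive to the third index $r$, so I would obtain them exactly as in Theorem~\ref{th1}: since $s>\frac dp+1$ the space $B^{s}_{p,\infty}$ embeds into the Lipschitz class, and the standard iteration scheme (cf. Majda--Bertozzi \cite{Majda01}, Bahouri--Chemin--Danchin \cite{B.C.D}) produces, on a time interval $T=T(R)>0$ depending only on the radius $R$, unique solutions $u^{\rm E}\in L^\infty_{T}B^{s}_{p,\infty}$ of \eqref{2} and $u^{\rm NS}_{\ep}\in L^\infty_{T}B^{s}_{p,\infty}$ of \eqref{1}, together with the $\ep$-uniform bound $\sup_{\ep\in[0,1]}\big(\|u^{\rm NS}_{\ep}\|_{L^\infty_T B^{s}_{p,\infty}}+\|u^{\rm E}\|_{L^\infty_T B^{s}_{p,\infty}}\big)\le C(R)$. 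I would take $T_1=T_2=T$; the role of $r<\infty$ in Theorem~\ref{th1} is only time-continuity and the limit itself, not the a priori bounds.

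For the non-convergence in (3) I set $w:=u^{\rm NS}_{\ep}-u^{\rm E}$ and write $G^{\rm NS}:=\mathbb{P}(u^{\rm NS}_{\ep}\cdot\nabla u^{\rm NS}_{\ep})$, $G^{\rm E}:=\mathbb{P}(u^{\rm E}\cdot\nabla u^{\rm E})$, with $\mathbb{P}$ the Leray projector. Since $w(0)=0$, subtracting the Duhamel formulas for \eqref{1} and \eqref{2} gives
\begin{align*}
w(t)=\underbrace{(e^{\ep t\Delta}-\mathrm{Id})u_0}_{=:M(t)}-\int_0^t e^{\ep(t-\tau)\Delta}(G^{\rm NS}-G^{\rm E})(\tau)\,\dd\tau-\int_0^t\big(e^{\ep(t-\tau)\Delta}-\mathrm{Id}\big)G^{\rm E}(\tau)\,\dd\tau,
\end{align*}
and I call the two integral terms $\mathcal R_1$ and $\mathcal R_2$, with $\mathcal R:=\mathcal R_1+\mathcal R_2$. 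The plan is to choose $u_0$ whose spectrum concentrates on a single dyadic shell $|\xi|\sim 2^{N}$, with $N=N(\ep)$ tuned by $\delta\ep^{2}2^{2N}\sim 1$ (that is, $2^N\sim(\ep\sqrt\delta)^{-1}$), normalized so that $u_0\in U_R$ and $2^{Ns}\|\Delta_N u_0\|_{L^p}\gtrsim1$; such a divergence-free block is built in the usual way by applying $\mathbb{P}$ to a frequency-$2^N$ oscillation times a fixed Schwartz envelope. Taking $t=\delta\ep$, I then estimate the single Littlewood--Paley block $\Delta_N w$ from below.

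The main term is controlled by symbol calculus: on the shell $|\xi|\sim2^N$ one has $\ep t|\xi|^2\sim\delta\ep^{2}2^{2N}\sim1$, so $1-e^{-\ep t|\xi|^2}$ is bounded below there by an absolute constant, whence $2^{Ns}\|\Delta_N M\|_{L^p}\ge\kappa_0>0$ with $\kappa_0$ independent of $\ep$. For the remainder I would estimate \emph{only} the block $\Delta_N$, which is the crucial gain: on the shell the heat-difference symbol obeys $|1-e^{-\ep(t-\tau)|\xi|^2}|\le c\,\ep(t-\tau)2^{2N}$, and combining this with the derivative count $2^{Ns}\|\Delta_N g\|_{L^p}\le2^{N}\|g\|_{B^{s-1}_{p,\infty}}$ and the algebra estimates $\|G^{\rm E}\|_{B^{s-1}_{p,\infty}}\lesssim\|u^{\rm E}\|_{B^{s}_{p,\infty}}^2\le C$ and $\|G^{\rm NS}-G^{\rm E}\|_{B^{s-1}_{p,\infty}}\lesssim\|w\|_{B^{s}_{p,\infty}}(\|u^{\rm NS}_{\ep}\|_{B^{s}_{p,\infty}}+\|u^{\rm E}\|_{B^{s}_{p,\infty}})\le C\|w\|_{B^{s}_{p,\infty}}$, the time integral produces the factor $\int_0^t\ep(t-\tau)2^{2N}\,\dd\tau\cdot2^{N}\sim t\,(\ep t2^{2N})\,2^N\sim t\,2^N\sim\delta^{1/2}$. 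Using the $\ep$-uniform bound $\|w\|_{L^\infty_TB^{s}_{p,\infty}}\le2C$ from (1)--(2), this yields $2^{Ns}\|\Delta_N\mathcal R\|_{L^p}\le C'\sqrt\delta$. Hence $\|w(t)\|_{B^{s}_{p,\infty}}\ge2^{Ns}\|\Delta_N w\|_{L^p}\ge\kappa_0-C'\sqrt\delta$, and fixing $\delta$ small enough that $C'\sqrt\delta\le\kappa_0/2$ proves \eqref{impor} with $\eta_0=\kappa_0/2$, uniformly in $\ep$.

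The step I expect to be the main obstacle is precisely the bound on $\mathcal R$. The nonlinearities live only in $B^{s-1}_{p,\infty}$ (one derivative is lost), while the heat semigroup regularizes at the slow rate $\ep^{-1/2}$, so a crude $B^{s}_{p,\infty}$-estimate of $\mathcal R$ is of the same order as, or worse than, the main term $M$; in fact $\mathcal R_2$ is genuinely unbounded in $B^{s}_{p,\infty}$ if all frequencies are estimated bluntly. The resolution rests on the two structural facts used above: first, for the \emph{lower} bound only the single block $\Delta_N$ is needed, so the dangerous frequencies $n\gg N$ never enter; second, at the tuned frequency the heat-difference symbol is \emph{small}, of size $O(\ep(t-\tau)2^{2N})$, which converts the lost derivative $2^N$ into the favorable power $t\,2^N\sim\sqrt\delta$. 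One must also check that $t=\delta\ep\le T_1$ (automatic once $\delta\le T_1(R)$) and that $u_0$ is genuinely divergence-free and of unit $B^{s}_{p,\infty}$-size after localization, both of which are routine.
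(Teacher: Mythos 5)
Your proposal is correct in substance but follows a genuinely different route from the paper, and the comparison is instructive. The paper fixes $\ep=2^{-n}$, takes data of small amplitude $\delta$ concentrated (as a lacunary sum) near frequencies $\frac{17}{12}2^{j}$, and works at the block $j=n$, so that $\ep t 2^{2n}=\delta\ll1$: the heat symbol is then \emph{linearized}, the leading term $e^{t\ep\Delta}u_0-u_0$ has size $\approx\delta^2$, and to beat it the error must be pushed to order $\delta^3$, which forces the four-term decomposition $\mathbf{I}_1,\dots,\mathbf{I}_4$ with second-order Duhamel expansions of both solutions and estimates in $\dot B^{s-2}_{p,\infty}$ (Propositions \ref{pr2}--\ref{pr3}). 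You instead tune the frequency so that $\ep t|\xi|^2\sim1$ on the active shell, making the leading term of size $O(1)$, and then a two-term splitting $\mathcal R_1+\mathcal R_2$ estimated only through the a priori bounds and the single block $\Delta_N$ yields an error $O(\sqrt\delta)$. This is shorter, avoids the second-order expansions entirely, and even gives a lower bound $\eta_0\sim1$ rather than $\eta_0\sim\delta^2$. The price is that your active frequency $2^N\sim(\ep\sqrt\delta)^{-1}$ sits in the regime where the heat symbol is \emph{not} small, so the lower bound on $2^{Ns}\|\Delta_N(e^{\ep t\Delta}-\mathrm{Id})u_0\|_{L^p}$ cannot be read off from the symbol alone when $p\neq2$: you need the inverse-multiplier argument (rescale to the unit shell, check that $(1-e^{-c|\eta|^2})^{-1}\chi(\eta)$ has an $L^1$-bounded inverse Fourier transform uniformly in $N$). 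The paper sidesteps this entirely because in its regime $1-e^{-x}\approx x$ and reverse Bernstein on an annulus suffices. Similarly, "apply $\mathbb P$ to an oscillation" requires a nondegeneracy check that the projection does not kill the block; the paper's explicit curl ansatz $(-\pa_2,\pa_1,0,\dots,0)f_j$ avoids this.

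One point you should repair rather than merely footnote: your datum depends on $\ep$ through $N=N(\ep)$, so as written you prove only the "for all $\ep$ there exists $u_0$" form of \eqref{impor}, not the first sentence of part (3), which asserts a \emph{single} $u_0$ for which convergence fails as $\ep\downarrow0$. This is exactly what the paper's lacunary sum $\sum_j 2^{-j(s+1)}f_j$ buys: one fixed datum whose every dyadic block has the right normalized size, so that for each $\ep$ in the sequence one simply reads off the tuned block. Your construction upgrades the same way (sum your unit-normalized shells over all $N$; the $B^s_{p,\infty}$ norm stays finite precisely because $r=\infty$), but this step must be included for the theorem as stated. Finally, the bound $\|\mathcal P(u\cdot\nabla u)\|_{B^{s-1}_{p,\infty}}\lesssim\|u\|_{B^s_{p,\infty}}^2$ needs a word of care at $p\in\{1,\infty\}$, where the Leray projector is not bounded on the inhomogeneous space at low frequency; the paper routes through $\dot B^{s-1}_{p,\infty}\cap\dot B^0_{p,1}$, while in your argument it suffices that you only ever test against the single high-frequency block $\Delta_N$.
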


\begin{remark}\label{re1}
The novelty of Theorem \ref{th2} is part (3), while part (1) and (2) is classical (see \cite[Remark 2]{GZ}).
\end{remark}

\noindent\textbf{Strategies to the proof of Theorem \ref{th2}.}

Due to the incompressibility condition, the pressure can be eliminated from \eqref{1} and \eqref{2}. In fact, applying the Leray operator $\mathcal{P}$ to \eqref{1} and \eqref{2}, respectively, then we have
\begin{align*}
\begin{cases}
\pa_t u-\ep \Delta u+\mathcal{P}(u\cdot \nabla u)=0,\\
u(0,x)=u_0(x),
\end{cases}
\end{align*}
and
\begin{align*}
\begin{cases}
\pa_t u+\mathcal{P}(u\cdot \nabla u)=0, \\
u(0,x)=u_0(x).
\end{cases}
\end{align*}
From Duhamel's principle, it follows that
\begin{align}
&u^{\rm NS}_{\ep}(t,u_0)=e^{t\ep\Delta}u_0-\int_0^te^{(t-\tau)\ep\Delta}\mathcal{P}\f(u^{\rm NS}_{\ep}\cdot \nabla u^{\rm NS}_{\ep}\g)\dd\tau,\label{yh1}\\
&u^{\rm E}(t,u_0)=u_0-\int_0^t\mathcal{P}\f(u^{\rm E}\cdot \nabla u^{\rm E}\g)\dd\tau.\label{yh2}
\end{align}
\noindent{\bf Step 1:}\;At a first sight, to ensure that the difference between the solution $u^{\rm NS}_{\ep}$ to \eqref{1} and $u^{\rm E}$ to \eqref{2} in the $B^s_{p,\infty}$-topology is bounded below by a positive constant at any later time, we need to construct special initial data $u_0\in B^s_{p,\infty}$ to satisfy that the difference between $e^{t\ep\Delta}u_0$ and $u_0$ in the $B^s_{p,\infty}$-topology is bounded below by a positive constant at any later time. In order to see this, let us introduce the following toy-model:

\noindent{\bf A toy-model: 1-D heat equation.}
Assume that $(s,p)$ satisfies the condition \eqref{con1} with $d=1$. Let $\ep_n:=2^{-2n}$ with $n\gg1$. The Cauchy problem
\begin{align*}
\begin{cases}
\pa_t u-\ep_n \pa^2_{xx} u=0,\\
u(0,x)=u_0(x)\in B^s_{p,\infty}(\R),
\end{cases}
\end{align*}
has a unique explicit solution $u^{\ep_n}(x,t)=e^{\ep_nt\pa^2_{xx} }u_0(x)$, where the heat kernel operator is given by $e^{\ep_nt\pa^2_{xx}}:=\mathcal{F}^{-1}e^{-\ep_nt|\xi|^2}\mathcal{F}.$

Let $\phi$ be given in subsection \ref{sec3.1} and set $u_0(x)=2^{-ns}\phi(x)\cos\f(\frac{17}{12}2^nx\g)$, then $u^{\ep_n}-u_0\nrightarrow 0$ in $B^s_{p,\infty}(\R)$.
In fact, for $n\gg1$,
notice that the frequency of $u_0$ is supported in the annulus $\{\xi\in \R:\ \frac{33}{24}2^{n}\leq |\xi|\leq \frac{35}{24}2^{n}\}$, one has for small $t>0$ and for some positive constants $c_1,C_1,C_2$ independent of $t$ and $n$
\begin{align*}
C_2t\geq&~\f\|e^{\ep_nt\pa^2_{xx} }u_0-u_0\g\|_{B^s_{p,\infty}}=2^{ns}\big\|\De_{n}\big(e^{t\ep_n\pa^2_{xx}}u_0-u_0\big)\big\|_{L^p}\nonumber\\
\geq&~  t2^{ns}\big\|{\De}_{n}\ep_n\pa^2_{xx}u_0\big\|_{L^p}-\int_0^t2^{ns}\left\|{\De}_{n}\ep_n\pa^2_{xx}\big(e^{\tau\ep_n\pa^2_{xx}}u_0-u_0 \big)\right\|_{L^{p}}\dd\tau\nonumber\\
\geq&~ ct\f\|\phi(x)\cos\f(\frac{17}{12}2^nx\g)\g\|_{L^p}-C\int_0^t2^{ns}\left\|{\De}_{n}\big(e^{\tau\ep_n\pa^2_{xx}}u_0-u_0 \big)\right\|_{L^{p}}\dd\tau\nonumber\\
\geq&~ c_1t-C_1t^2.
 \end{align*}
For more details see subsection \ref{sec3.1}.

\noindent{\bf Step 2:}\;It remains to prove that the error term
$$\int_0^te^{(t-\tau)\ep\Delta}\mathcal{P}(u^{\rm NS}_{\ep}\cdot \nabla u^{\rm NS}_{\ep})\dd\tau-\int_0^t\mathcal{P}\f(u^{\rm E}\cdot \nabla u^{\rm E}\g)\dd\tau$$
is smaller than the leading term $e^{t\ep\Delta}u_0-u_0$.
We can decompose the above error term into four terms (see \eqref{fj-1} in subsection \ref{sec3.4}) and estimate them by some basic analytical techniques (see Propositions \ref{pr2}-\ref{pr3} in subsection \ref{sec3.3}).

\noindent\textbf{Organization of our paper.}

In Section \ref{sec2}, we list some notations and known results which will be used in the sequel. We prove Theorem \ref{th2} in Section \ref{sec3} by dividing it into several parts: \\
(1) Construction and Estimation of initial data (see subsection \ref{sec3.1});\\
(2) Estimation of leading term (see subsection \ref{sec3.2});\\
(3) Estimation of error terms (see subsection \ref{sec3.3});\\
(4) Non-convergence (see subsection \ref{sec3.4}).
\section{Preliminaries}\label{sec2}
We will use the following notations throughout this paper.
\begin{itemize}
  \item For $X$ a Banach space and $I\subset\R$, we denote by $\mathcal{C}(I;X)$ the set of continuous functions on $I$ with values in $X$. Sometimes we will denote $L^p(0,T;X)$ by $L_T^pX$.
  \item The symbol $\mathrm{A}\approx \mathrm{B}$ means that there is a uniform positive ``harmless" constant $C$ independent of $\mathrm{A}$ and $\mathrm{B}$ such that $C^{-1}\mathrm{B}\leq \mathrm{A}\leq C\mathrm{B}$.
  \item For all $u\in \mathcal{S}(\mathbb{R}^{d})$ (Schwartz space), the Fourier transform $\mathcal{F}u$ (also denoted by $\widehat{u}$) is defined by
$$
\f(\mathcal{F}u\g)(\xi)=\widehat{u}(\xi)=\int_{\mathbb{R}^{d}}e^{-\mathrm{i}x\cdot\xi}u(x)\dd x \quad\text{for any}\; \xi\in\mathbb{R}^{d}.
$$
\item For all $u\in \mathcal{S}(\mathbb{R}^{d})$, the inverse Fourier transform is defined by $(\mathcal{F}^{-1}u)=(2\pi)^{-d}\f(\mathcal{F}u\g)(-\xi)$, namely,
$$
(\mathcal{F}^{-1}u)(x)=(2\pi)^{-d}\int_{\R^d}e^{\mathrm{i}x\cdot\xi}u(\xi)\dd\xi.
$$
  \item For all $F\in \mathcal{S'}(\mathbb{R}^{d})$ (space of tempered distributions), the Fourier transform $\mathcal{F}F=\widehat{F}$ is defined by
$$
\langle\mathcal{F}F,u\rangle=\langle F,\mathcal{F}u\rangle\quad\text{for all}\; u\in\mathcal{S}(\R^d).
$$
 \item We denote the projection
\bbal
&\mathcal{P}: L^{p}(\mathbb{R}^{d}) \rightarrow L_{\sigma}^{p}(\mathbb{R}^{d}) \equiv \overline{\left\{f \in \mathcal{C}^\infty_{0}(\mathbb{R}^{d}) ; {\rm{div}} f=0\right\}}^{\|\cdot\|_{L^{p}(\mathbb{R}^{d})}},\quad p\in(1,\infty),\\
&\mathcal{Q}=\mathrm{Id}-\mathcal{P}.
\end{align*}
In $\mathbb{R}^{d}$, $\mathcal{P}$ can be defined by $\mathcal{P}= \mathrm{Id}+(-\Delta)^{-1}\nabla {\rm{div}}$, or equivalently, $\mathcal{P}=(\mathcal{P}_{i j})_{1 \leqslant i, j \leqslant d}$, where $\mathcal{P}_{i j} \equiv \delta_{i,j}+R_{i} R_{j}$ with $\delta_{i,j}$ being the Kronecker delta ($\delta_{i,j}=0$ for $i\neq j$ and $\delta_{i,i}=0$) and $R_{i}$ being the Riesz transform with symbol $-\mathrm{i}\xi_1/|\xi|$.
\end{itemize}
Next, we will recall some facts about the Littlewood-Paley decomposition and the nonhomogeneous Besov spaces (see \cite{B.C.D} for more details).
Choose a radial, non-negative, smooth function $\vartheta:\R^d\mapsto [0,1]$ such that
\begin{itemize}
  \item ${\rm{supp}} \;\vartheta\subset B(0, 4/3)$;
  \item $\vartheta(\xi)\equiv1$ for $|\xi|\leq3/4$.
\end{itemize}
Setting $\varphi(\xi):=\vartheta(\xi/2)-\vartheta(\xi)$, then we deduce that $\varphi$ has the following properties
\begin{itemize}
  \item ${\rm{supp}} \;\varphi\subset \left\{\xi\in \R^d: 3/4\leq|\xi|\leq8/3\right\}$;
  \item $\varphi(\xi)\equiv 1$ for $4/3\leq |\xi|\leq 3/2$;
  \item $\vartheta(\xi)+\sum_{j\geq0}\varphi(2^{-j}\xi)=1$ for any $\xi\in \R^d$;
  \item $\sum_{j\in \mathbb{Z}}\varphi(2^{-j}\xi)=1$ for any $\xi\in \R^d\setminus\{0\}$.
\end{itemize}
For any $u\in \mathcal{S'}(\R^d)$, the nonhomogeneous and homogeneous dyadic blocks are defined as follows
\begin{align*}
&\Delta_ju=0,\; \text{if}\; j\leq-2;\\
&\Delta_{-1}u=\vartheta(D)u;\\
&\Delta_ju=\varphi(2^{-j}D)u,\; \; \text{if}\;j\geq0,
\end{align*}
and for any $u\in \mathcal{S}'_h(\R^d)$
\begin{align*}
\dot{\Delta}_ju=\varphi(2^{-j}D)u,\; \; \text{if}\;j\in \mathbb{Z},
\end{align*}
where the pseudo-differential operator is defined by $\sigma(D):u\to\mathcal{F}^{-1}(\sigma \mathcal{F}u)$ and $\mathcal{S}'_h$ is given by
\begin{eqnarray*}
\mathcal{S}'_h:=\Big\{u \in \mathcal{S'}(\mathbb{R}^{d}):\; \lim_{j\rightarrow-\infty}\|\vartheta(2^{-j}D)u\|_{L^{\infty}}=0 \Big\}.
\end{eqnarray*}

\begin{remark}\label{not}
Assume that $\mathrm{supp} \ \widehat{f_j}\subset \left\{\xi\in\R^d: \ \frac{33}{24}2^{j}\leq |\xi|\leq \frac{35}{24}2^{j}\right\}$. Obviously, $\varphi(2^{-j}\xi)=1$ if $\xi\in \mathrm{supp} \ \widehat{f_j}$. Also, $\mathrm{supp} \ \widehat{f_j}\,\cap\, \mathrm{supp} \ \varphi(2^{-n}\cdot)=\emptyset$ if $n\neq j$.
Thus
we have for $-1\leq j,n\in \mathbb{Z}$
\begin{align*}
{\Delta_n(f_j)=\mathcal{F}^{-1}\f(\varphi(2^{-n}\cdot)\widehat{f}_{j}\g)=}
\begin{cases}
f_j, &\text{if}\; n=j,\\
0, &\text{if}\; n\neq j.
\end{cases}
\end{align*}
\end{remark}
We recall the definition of the Besov spaces and norms.
\begin{definition}[Besov Spaces, \cite{B.C.D}]
Let $s\in\mathbb{R}$ and $(p,r)\in[1, \infty]^2$. The nonhomogeneous Besov spaces are defined as follows
\begin{align*}
B_{p,r}^{s}(\R^d)=\f\{f \in \mathcal{S}'(\mathbb{R}^{d}):\; \|f\|_{B_{p,r}^{s}(\mathbb{R}^{d})}< \infty \g\},\quad\text{where}
\end{align*}
\begin{numcases}{\|f\|_{B^{s}_{p,r}(\R^d)}:=}
\left(\sum_{j\geq-1}2^{sjr}\|\Delta_jf\|^r_{L^p(\R^d)}\right)^{1/r}, &if $1\leq r<\infty$,\nonumber\\
\sup_{j\geq-1}\f(2^{sj}\|\Delta_jf\|_{L^p(\R^d)}\g), &if $r=\infty$.\nonumber
\end{numcases}
The homogeneous Besov spaces are defined as follows
\begin{align*}
\dot{B}_{p,r}^{s}(\R^d)=\Big\{f \in \mathcal{S}'_h(\mathbb{R}^{d}):\; \|f\|_{\dot{B}_{p,r}^{s}(\mathbb{R}^{d})}< \infty \Big\},\quad\text{where}
\end{align*}
\begin{numcases}{\|f\|_{\dot{B}^{s}_{p,r}(\R^d)}:=}
\left(\sum_{j\in\mathbb{Z}}2^{sjr}\|\dot{\Delta}_jf\|^r_{L^p(\R^d)}\right)^{1/r}, &if $1\leq r<\infty$,\nonumber\\
\sup_{j\in\mathbb{Z}}\f(2^{sj}\|\dot{\Delta}_jf\|_{L^p(\R^d)}\g), &if $r=\infty$.\nonumber
\end{numcases}
\end{definition}
We remark that, for any $s>0$ and $(p,r)\in[1, \infty]^2$, then $B^{s}_{p,r}(\R^d)=\dot{B}^{s}_{p,r}(\R^d)\cap L^p(\R^d)$ and
$$\|f\|_{B^{s}_{p,r}}\approx \|f\|_{L^{p}}+\|f\|_{\dot{B}^{s}_{p,r}}.$$

The following Bernstein's inequalities will be used in the sequel.
\begin{lemma}[Bernstein's inequalities, \cite{B.C.D}] \label{lem2.1} Let $\mathcal{B}$ be a ball and $\mathcal{C}$ be an annulus. There exists a positive constant $C$ such that for all $k\in \mathbb{N}\cup \{0\}$, any $\lambda\in \R^+$ and any function $f\in L^p$ with $1\leq p \leq q \leq \infty$
\begin{align*}
&{\rm{supp}}\ \widehat{f}\subset \lambda \mathcal{B}\;\Rightarrow\; \|D^kf\|_{L^q}\leq C^{k+1}\lambda^{k+(\frac{d}{p}-\frac{d}{q})}\|f\|_{L^p},  \\
&{\rm{supp}}\ \widehat{f}\subset \lambda \mathcal{C}\;\Rightarrow\; C^{-k-1}\lambda^k\|f\|_{L^p} \leq \|D^kf\|_{L^p} \leq C^{k+1}\lambda^k\|f\|_{L^p}.
\end{align*}
\end{lemma}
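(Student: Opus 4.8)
The plan is to reduce everything to the unit scale $\lambda=1$ by a homogeneity argument, to realize each frequency‑localized derivative as convolution against a \emph{fixed} Schwartz kernel, and then to invoke Young's convolution inequality. The genuinely delicate point is not any of these steps but the uniformity of the constant in $k$: one must show that the relevant kernels have $L^r$-norms bounded by $C^{k+1}$ rather than by something like $C\,k!$.

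For the first (ball) inequality I would fix $\chi\in\mathcal{D}(\R^d)$ with $\chi\equiv1$ on $\mathcal{B}$. Since $\mathrm{supp}\,\widehat{f}\subset\lambda\mathcal{B}$ we have $\chi(D/\lambda)f=f$, so for any multi-index $\alpha$ with $|\alpha|=k$ one may write $\partial^\alpha f=g_{\alpha,\lambda}*f$, where $g_{\alpha,\lambda}=\mathcal{F}^{-1}\big((\mathrm{i}\xi)^\alpha\chi(\xi/\lambda)\big)$. Scaling out $\lambda$ gives $g_{\alpha,\lambda}(x)=\lambda^{k+d}g_\alpha(\lambda x)$ with $g_\alpha=\mathcal{F}^{-1}\big((\mathrm{i}\xi)^\alpha\chi\big)$, hence $\|g_{\alpha,\lambda}\|_{L^r}=\lambda^{k+d(1-1/r)}\|g_\alpha\|_{L^r}$. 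Applying Young's inequality with the exponent fixed by $1+\frac1q=\frac1r+\frac1p$, equivalently $\frac1r=1-(\frac1p-\frac1q)$, produces exactly $\|\partial^\alpha f\|_{L^q}\le\lambda^{k+d(1/p-1/q)}\|g_\alpha\|_{L^r}\|f\|_{L^p}$, which already carries the claimed power of $\lambda$ once the kernel norm is controlled.

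The crux is the bound $\|g_\alpha\|_{L^r}\le C^{k+1}$ uniformly in $\alpha$. Since $g_\alpha$ is Schwartz, for $N>d$ one has $\|g_\alpha\|_{L^r}\le C_N\sup_x(1+|x|)^N|g_\alpha(x)|$, and moving the weight $x^\beta$ onto $\xi$-derivatives shows the weighted sup is $\lesssim\sum_{|\beta|\le N}\|\partial_\xi^\beta\big((\mathrm{i}\xi)^\alpha\chi\big)\|_{L^1}$. On the fixed support of $\chi$ (radius $R$), a $\xi$-derivative of $(\mathrm{i}\xi)^\alpha$ lowers the monomial power and costs only a combinatorial factor bounded by $k^{|\beta|}\le k^N$, so each such term is at most $C_N\,k^N R^k$ over a support of fixed volume; thus $\|g_\alpha\|_{L^r}\le C_N\,k^N R^k\le C^{k+1}$, the polynomial prefactor $k^N$ being absorbed by slightly enlarging the exponential base. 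This is precisely the place where one must argue carefully to avoid a factorial blow-up.

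For the second (annulus) inequality the upper bound is simply the case $p=q$ of the first estimate, so only the lower bound is new. Here I would use that $\mathcal{C}$ avoids the origin: choose $\widetilde\chi\in\mathcal{D}(\R^d\setminus\{0\})$ with $\widetilde\chi\equiv1$ on $\mathcal{C}$ and write, on $\mathrm{supp}\,\widehat{f}$, the identity $\widehat{f}(\xi)=\frac{\widetilde\chi(\xi/\lambda)}{|\xi|^{k}}\,|\xi|^{k}\widehat{f}(\xi)$. This realizes $f=\lambda^{-k}h_{k,\lambda}*\big(|D|^k f\big)$ with $h_{k,\lambda}(x)=\lambda^d h_k(\lambda x)$, $h_k=\mathcal{F}^{-1}\big(\widetilde\chi/|\cdot|^k\big)$, and Young's inequality yields $\lambda^{k}\|f\|_{L^p}\le\|h_k\|_{L^1}\,\||D|^k f\|_{L^p}$. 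The same weighted estimate bounds $\|h_k\|_{L^1}\le C^{k+1}$, since on the annulus $|\xi|^{-k}\le r_0^{-k}$ and its $\xi$-derivatives cost only powers of $k$, again absorbed into an exponential; passing between $|D|^k$ and $\sup_{|\alpha|=k}\partial^\alpha$ on the annulus is a further application of the same multiplier trick. Combining these gives the two-sided bound. I expect the main obstacle throughout to be exactly this uniform-in-$k$ control of the kernel norms; the scaling, localization, and Young steps are routine once that is in hand.
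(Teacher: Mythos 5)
The paper does not prove this lemma at all: it is quoted verbatim from the cited reference \cite{B.C.D} (Bahouri--Chemin--Danchin, Lemma 2.1 there), so the only available comparison is with that standard proof. Your argument --- rescaling to $\lambda=1$, realizing $\partial^\alpha$ as convolution with $\mathcal{F}^{-1}\big((\mathrm{i}\xi)^\alpha\chi\big)$, applying Young's inequality, inverting $|\xi|^k$ on the annulus via a compactly supported multiplier, and controlling the kernel norms by $C^{k+1}$ through weighted $L^1$ bounds on $\xi$-derivatives --- is correct and is essentially that same standard proof, including the key point of absorbing the polynomial-in-$k$ factors into the exponential base.
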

Finally we recall the following product law which will be used often in the sequel.
\begin{lemma}[Product estimation, \cite{B.C.D}]\label{cj} Assume $(s,p)$ satisfies \eqref{con1}.
$B^{s}_{p,\infty}(\R^d)$ is a Banach algebra.
For any $r\in[1,\infty]$, then
 there exists a constant $C>0$ such that
$$\|fg\|_{B^{s}_{p,r}}\leq C\f(\|f\|_{L^\infty}\|g\|_{B^s_{p,r}}+\|g\|_{L^\infty}\|f\|_{B^{s}_{p,r}}\g),\quad \forall f,g\in L^\infty\cap B^{s}_{p,r}.$$
Furthermore, due to the embedding $B^{s-1}_{p,r}(\R^d)\hookrightarrow L^{\infty}(\R^d)$, there holds
$$\|f\cdot \na g\|_{B^{s-1}_{p,r}}\leq C\|f\|_{B^{s-1}_{p,r}}\|g\|_{B^s_{p,r}},\quad \forall(f,g)\in B^{s-1}_{p,r}\times B^s_{p,r}.$$
\end{lemma}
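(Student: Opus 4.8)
The plan is to prove the product estimate via Bony's paraproduct decomposition and then read off both the Banach-algebra property and the convection estimate as corollaries. Write, for $f,g\in\mathcal{S}'(\R^d)$,
\[
fg=T_fg+T_gf+R(f,g),\qquad T_fg:=\sum_{j}S_{j-1}f\,\Delta_jg,\quad R(f,g):=\sum_{|k-k'|\leq1}\Delta_kf\,\Delta_{k'}g,
\]
where $S_{j-1}f=\sum_{k\leq j-2}\Delta_kf$ is the low-frequency cut-off. The two paraproduct terms $T_fg$ and $T_gf$ will be controlled for \emph{every} $s\in\R$, whereas the remainder $R(f,g)$ is the only piece where the positivity $s>0$ (guaranteed by \eqref{con1}, since $s>\frac{d}{p}+1>0$) is genuinely used; this remainder is the step I expect to be the main obstacle.

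For the paraproduct $T_fg$, each summand $S_{j-1}f\,\Delta_jg$ has Fourier support in a fixed dyadic annulus $2^j\mathcal{C}$, so $\Delta_n(S_{j-1}f\,\Delta_jg)$ vanishes unless $|n-j|\leq N_0$ for some universal $N_0$. Using $\|S_{j-1}f\|_{L^\infty}\leq C\|f\|_{L^\infty}$ one gets
\[
2^{ns}\|\Delta_nT_fg\|_{L^p}\leq C\|f\|_{L^\infty}\sum_{|n-j|\leq N_0}2^{(n-j)s}\,2^{js}\|\Delta_jg\|_{L^p},
\]
and taking the $\ell^r$-norm in $n$ (a finite shift-sum of the sequence $2^{js}\|\Delta_jg\|_{L^p}$) yields $\|T_fg\|_{B^s_{p,r}}\leq C\|f\|_{L^\infty}\|g\|_{B^s_{p,r}}$. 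By symmetry $\|T_gf\|_{B^s_{p,r}}\leq C\|g\|_{L^\infty}\|f\|_{B^s_{p,r}}$.

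The remainder is the delicate term. Grouping $R_k:=\sum_{|i|\leq1}\Delta_kf\,\Delta_{k+i}g$, each $R_k$ has Fourier support in a \emph{ball} of radius $\sim 2^k$ rather than an annulus, so now $\Delta_nR_k=0$ only forces $k\geq n-N_1$. Bernstein's Lemma \ref{lem2.1} gives $\|\Delta_kf\|_{L^\infty}\leq C\|f\|_{L^\infty}$, whence $\|R_k\|_{L^p}\leq C\|f\|_{L^\infty}\sum_{|i|\leq1}\|\Delta_{k+i}g\|_{L^p}$, and therefore
\[
2^{ns}\|\Delta_nR(f,g)\|_{L^p}\leq C\|f\|_{L^\infty}\sum_{k\geq n-N_1}2^{(n-k)s}\,2^{ks}\|\Delta_kg\|_{L^p}.
\]
Here the exponent $n-k$ is bounded above by $N_1$ and tends to $-\infty$, so $s>0$ makes $\{2^{(n-k)s}\}_{k\geq n-N_1}$ summable (geometric); a discrete Young convolution inequality then bounds the $\ell^r_n$-norm of the right-hand side by $C\|f\|_{L^\infty}\|g\|_{B^s_{p,r}}$. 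This is exactly where the argument would fail for $s\leq0$. Summing the three contributions proves the product estimate, and by symmetry the $L^\infty$-factor may equally be placed on $g$.

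It remains to deduce the two corollaries. Since \eqref{con1} gives $s>\frac{d}{p}$, the embedding $B^s_{p,\infty}(\R^d)\hookrightarrow L^\infty(\R^d)$ holds, so $\|f\|_{L^\infty}\leq C\|f\|_{B^s_{p,\infty}}$; inserting this into the product estimate with $r=\infty$ gives $\|fg\|_{B^s_{p,\infty}}\leq C\|f\|_{B^s_{p,\infty}}\|g\|_{B^s_{p,\infty}}$, i.e. $B^s_{p,\infty}$ is a Banach algebra. For the convection estimate, differentiation maps $B^s_{p,r}\to B^{s-1}_{p,r}$ with $\|\na g\|_{B^{s-1}_{p,r}}\leq C\|g\|_{B^s_{p,r}}$ (again Bernstein), and \eqref{con1} also gives $s-1>\frac{d}{p}>0$, so the product estimate is available at regularity $s-1$ and $B^{s-1}_{p,r}\hookrightarrow L^\infty$. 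Applying the product estimate componentwise to $f\cdot\na g=\sum_i f_i\,\pa_ig$,
\[
\|f\cdot\na g\|_{B^{s-1}_{p,r}}\leq C\f(\|f\|_{L^\infty}\|\na g\|_{B^{s-1}_{p,r}}+\|\na g\|_{L^\infty}\|f\|_{B^{s-1}_{p,r}}\g),
\]
and bounding $\|f\|_{L^\infty}\leq C\|f\|_{B^{s-1}_{p,r}}$ together with $\|\na g\|_{L^\infty}\leq C\|\na g\|_{B^{s-1}_{p,r}}\leq C\|g\|_{B^s_{p,r}}$ collapses the right-hand side to $C\|f\|_{B^{s-1}_{p,r}}\|g\|_{B^s_{p,r}}$, which is the desired bound.
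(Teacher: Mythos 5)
Your proof is correct, and it is essentially the same argument as the paper's source: the paper states this lemma without proof, citing \cite{B.C.D}, where it is proved exactly by this Bony decomposition — paraproduct bounds valid for all $s\in\R$ plus the remainder estimate that uses $s>0$ via a geometric sum and discrete Young inequality. Your derivation of the algebra property and of the convection estimate from the embeddings $B^{s}_{p,\infty}(\R^d)\hookrightarrow L^{\infty}(\R^d)$ and $B^{s-1}_{p,r}(\R^d)\hookrightarrow L^{\infty}(\R^d)$ is likewise the intended route.
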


\section{Proof of Theorem \ref{th2}}\label{sec3}
In this section, we prove Theorem \ref{th2} by dividing it into several parts:
(1) Construction and Estimation of initial data;
(2) Estimation of leading term;
(3) Estimation of errors terms;
(4) Non-convergence.

\subsection{Construction of Initial Data}\label{sec3.1}
We need to introduce smooth, radial cut-off functions to localize the frequency region. Let $\widehat{\phi}\in \mathcal{C}^\infty_0(\mathbb{R})$ be an even, real-valued and non-negative function on $\R$ that satisfies
\begin{align*}{\widehat{\phi}(\xi)=}
\begin{cases}
1, & \mathrm{if} \; |\xi|\leq \frac{1}{4^d},\\
0, & \mathrm{if} \; |\xi|\geq \frac{1}{2^d}.\end{cases}
\end{align*}
We assume that $\widehat{\phi}\in \mathcal{C}^\infty_0(\mathbb{R})$ is even and real-valued such that $\phi(x)=(2\pi)^{-1}\mathcal{F}^{-1}(\widehat{\phi}(\xi))$ is a real-valued and continuous function on $\R$. It is easy to check that $\|\phi\|_{L^\infty}=\phi(0)>0$. Then there exists some $r>0$ such that
$\phi(x)\geq \phi(0)/{2}$ for any $x\in B_{r}(0).$
Moreover, we have
\begin{remark}\label{ley2}
For any $p\in[1,\infty]$, there exists two positive constants $c_1$ and $C_1$, such that
\begin{align*}
c_1\leq\|\phi\|_{L^p(\R)}\leq C_1.
\end{align*}
\end{remark}

\begin{remark}\label{ley54}
Let $f(x)\in\{\sin x,\cos x\}$. For any $p\in[1,\infty]$, there exists some $r>0$ and positive constant $M$, such that for $n\gg1$
\bbal
\left\|\phi(x)f\left(\frac{17}{12}2^{n}x\right)\right\|_{L^p(\R)}\geq \frac{\phi(0)}{2}\left\|f\left(\frac{17}{12}2^{n}x\right)\right\|_{L^p(B_{r}(0))}\geq M.
\end{align*}
In fact, for the case when $p=\infty$, we have for some $n$ large enough
\bbal
\left\|f\left(\frac{17}{12}2^{n}x\right)\right\|_{L^{\infty}(B_{r}(0))}=\left\|f(x)\right\|_{L^{\infty}(B_{\lambda_n}(0))}=1\quad\text{with}\quad \lambda_n:=\fr{17}{12}r2^{n}.
\end{align*}
For the case when $p\in[1,\infty)$, there exists a positive integer number $N$ such that for $n>N$
\bbal
\left\|f\left(\frac{17}{12}2^{n}x\right)\right\|^p_{L^p(B_{r}(0))}&=\frac{2r}{\lambda_n}\int^{\lambda_n}_{0}|f(x)|^p\dd x\geq\frac{r}{\pi}\int^\pi_0|f(x)|^p\dd x,
\end{align*}
since
\bbal
\lim_{n\rightarrow \infty}\frac{1}{\lambda_n}\int_0^{\lambda_n}|f(x)|^p\dd x=\frac{1}{\pi}\int^\pi_0|f(x)|^p\dd x.
\end{align*}
\end{remark}

\begin{lemma}\label{ley3} Let $s\in \R$ and $p\in[1,\infty]$. For some fixed $\delta\in(0,1)$ which will be determined later, we define the divergence-free vector field $u_0$  by
\begin{align*}
&u_0(x):=\delta\f(
-\pa_2,\;
\pa_1,\;
0,\;
\ldots,\;
0
\g)\sum_{j=3}^{\infty}2^{-j(s+1)}f_j(x)
,\quad \text{where}\\
&f_j(x):=\phi(x_1)\cos \left(\frac{17}{12}2^jx_1\right)\prod_{i=2}^d\phi(x_i).
\end{align*}
Then there exists some sufficiently large $n\in \mathbb{Z}^+$ such that
\begin{align}\label{z3}
&\|u_0\|_{B^s_{p,\infty}}\thickapprox \delta\quad\text{and}\quad 2^{ns}\|\De_{n}u_0\|_{L^p}\thickapprox \delta.
\end{align}
\end{lemma}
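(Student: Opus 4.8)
The plan is to exploit the lacunary structure of $u_0$: each building block $f_j$ is frequency-localized near $|\xi|\approx 2^j$, so the Littlewood--Paley projections of $u_0$ become essentially explicit, and both assertions in \eqref{z3} reduce to computing the $L^p$-size of the two nonzero components of $\Delta_n u_0$. The crux is the anisotropy, namely that differentiating in $x_1$ (the oscillatory direction) produces a gain of $2^n$, whereas differentiating in $x_2$ does not; this is what makes the curl-type field $u_0=\delta(-\partial_2 g,\partial_1 g,0,\ldots,0)$ behave like a single frequency mode of size $\delta$ at scale $2^n$.

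First I would pin down the Fourier support of $f_j$. Since $f_j(x)=\phi(x_1)\cos(\tfrac{17}{12}2^j x_1)\prod_{i=2}^d\phi(x_i)$ and $\widehat{\phi}$ is supported in $\{|\cdot|\leq 2^{-d}\}$, the transform factorizes and $\widehat{f_j}$ is supported where $|\xi_1|\in[\tfrac{17}{12}2^j-2^{-d},\tfrac{17}{12}2^j+2^{-d}]$ and $|\xi_i|\leq 2^{-d}$ for $i\geq2$. Estimating $|\xi|$ and using $d\geq2$, $j\geq3$ then gives $\mathrm{supp}\,\widehat{f_j}\subset\{\tfrac{33}{24}2^j\leq|\xi|\leq\tfrac{35}{24}2^j\}$, where the choice $\tfrac{17}{12}=\tfrac{34}{24}$ and the smallness $2^{-d}$ of the spread are exactly what is needed. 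By Remark \ref{not} this yields $\Delta_n f_j=f_j$ if $n=j$ and $0$ otherwise, hence $\Delta_n g=2^{-n(s+1)}f_n$ for $n\geq3$ and $\Delta_n g=0$ for $n\leq2$, where $g:=\sum_{j\geq3}2^{-j(s+1)}f_j$. Since $\Delta_n$ commutes with $\partial_1,\partial_2$, I obtain for every $n\geq3$ the explicit formula
\[
\Delta_n u_0=\delta\,2^{-n(s+1)}\bigl(-\partial_2 f_n,\ \partial_1 f_n,\ 0,\ldots,0\bigr).
\]

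Next I would estimate the two components in $L^p$. Writing $\partial_1 f_n=\bigl[\phi'(x_1)\cos(\tfrac{17}{12}2^n x_1)-\tfrac{17}{12}2^n\phi(x_1)\sin(\tfrac{17}{12}2^n x_1)\bigr]\prod_{i\geq2}\phi(x_i)$, the triangle inequality together with Remark \ref{ley54} (applied with $f=\sin$) and Remark \ref{ley2} yields $\|\partial_1 f_n\|_{L^p}\geq c\,2^n$ for $n$ large, while the trivial bound gives $\|\partial_1 f_n\|_{L^p}\leq C\,2^n$; hence $\|\partial_1 f_n\|_{L^p}\approx 2^n$. On the other hand $\partial_2 f_n=\phi(x_1)\cos(\tfrac{17}{12}2^n x_1)\phi'(x_2)\prod_{i\geq3}\phi(x_i)$ carries no oscillatory derivative, so $\|\partial_2 f_n\|_{L^p}\approx\|f_n\|_{L^p}\approx1$ by Remarks \ref{ley2} and \ref{ley54} (with $f=\cos$). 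Using the equivalence of the vector $L^p$-norm with the sum of the component norms, the second component dominates, and
\[
\|\Delta_n u_0\|_{L^p}\approx \delta\,2^{-n(s+1)}\|\partial_1 f_n\|_{L^p}\approx \delta\,2^{-ns},\qquad n\gg1.
\]

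Finally, multiplying by $2^{ns}$ gives the second relation in \eqref{z3} for any sufficiently large $n$. For the first relation I would note that $2^{ns}\|\Delta_n u_0\|_{L^p}\leq C\delta$ holds for \emph{all} $n\geq3$ (since $\|f_n\|_{L^p}\leq C$ and $\|\partial_1 f_n\|_{L^p}\leq C2^n$ for every $n$), while the matching lower bound $\geq c\delta$ holds for all large $n$; taking the supremum over $n\geq-1$ (recalling $\Delta_n u_0=0$ for $n\leq2$) gives $\|u_0\|_{B^s_{p,\infty}}\approx\delta$. The only genuinely delicate point is the anisotropic lower bound $\|\partial_1 f_n\|_{L^p}\gtrsim 2^n$: one must ensure the oscillatory $\sin$-term is neither cancelled by the $\phi'$-term nor annihilated in $L^p$ (exactly the content of Remark \ref{ley54}) and that it dominates the $\partial_2$-direction; everything else is bookkeeping with the frequency localization.
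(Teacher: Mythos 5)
Your proposal is correct and follows essentially the same route as the paper: establish the Fourier support of $f_j$ inside the annulus $\{\tfrac{33}{24}2^j\leq|\xi|\leq\tfrac{35}{24}2^j\}$, invoke Remark \ref{not} to get the explicit formula $\Delta_n u_0=\delta 2^{-n(s+1)}(-\partial_2 f_n,\partial_1 f_n,0,\ldots,0)$, and then estimate the components via Remarks \ref{ley2} and \ref{ley54}. Your write-up merely makes explicit the anisotropic computation $\|\partial_1 f_n\|_{L^p}\approx 2^n$ versus $\|\partial_2 f_n\|_{L^p}\lesssim 1$ that the paper leaves implicit in its appeal to those remarks.
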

\begin{proof}\,
By easy computations, we deduce that
\begin{align*}
\widehat{f_j}(\xi)
=\fr12 \left[\widehat{\phi}\left(\xi_1+\frac{17}{12} 2^{j}\right)+\widehat{\phi}\left(\xi_1-\frac{17}{12} 2^{j}\right)\right]\prod_{i=2}^d\widehat{\phi}\f(\xi_i\g).
\end{align*}
Due to the support condition of $\widehat{\phi}$, we see that
\bbal
&\mathrm{supp} \ \widehat{f_j}(\xi)\subset \mathcal{S}_1 \cup \mathcal{S}_2,
\end{align*}
where
\bbal
&\mathcal{S}_1:= \left\{\xi\in\R^d: \left|\xi_1+\frac{17}{12} 2^{j}\right|\leq \frac{1}{2^{d}},\ |\xi_i|\leq \frac{1}{2^{d}},\ 2\leq i\leq d\right\}, \\
&\mathcal{S}_2:= \left\{\xi\in\R^d: \left|\xi_1-\frac{17}{12} 2^{j}\right|\leq \frac{1}{2^{d}},\ |\xi_i|\leq \frac{1}{2^{d}},\ 2\leq i\leq d\right\},
\end{align*}
which implies that
 \begin{align*}
\mathrm{supp} \ \widehat{f_j}
&\subset \left\{\xi\in\R^d: \ \frac{33}{24}2^{j}\leq |\xi|\leq \frac{35}{24}2^{j}\right\}.
\end{align*}
Noticing that $\Delta_k(f_j)=\delta_{k,j}f_j$ for some $k\geq-1$ (see Remark \ref{not}),
we deduce that
 \begin{align*}
 \Delta_{k}u_0:=\delta2^{-k(s+1)}
\f(
-\pa_2f_k,\;
\pa_1f_k,\;
0,\;
\cdots,\;
0
\g),
\end{align*}
which implies that
\begin{align*}
 \|u_{0}\|_{{B}_{p,\infty}^{s}(\R^d)}&= \sup_{k\geq -1}\f(2^{ks}\|\Delta_{k}u_{0}\|_{L^{p}(\R^d)}\g)\\
 &= \delta\sup_{k\geq 0}\f(2^{-k}\|\pa_1f_k\|_{L^{p}(\R^d)}+2^{-k}\|\pa_2f_k\|_{L^{p}(\R^d)}\g)\thickapprox \delta,
\end{align*}
and
\begin{align*}
 2^{ns}\|\Delta_{n}u_0\|_{L^p(\R^d)}&
= \delta2^{-n}\f(\|\pa_1f_n\|_{L^{p}(\R^d)}+\|\pa_2f_n\|_{L^{p}(\R^d)}\g)\thickapprox \delta,
\end{align*}
where we have used Remark \ref{ley2} and Remark \ref{ley54}.
We complete the proof of Lemma \ref{ley2}.
\end{proof}
\subsection{Estimation of Leading Term}\label{sec3.2}
Now we establish the following proposition involving the leading term $e^{t\ep\Delta}u_0-u_0$ which will affect the non-convergence of solutions to \eqref{1} in the proof of Theorem \ref{th2}.

From now on, we set $\ep:=2^{-n}$ with $n\gg 1$.
\begin{proposition}\label{pr0}\label{lmm1} Let $u_0$ be given by Lemma \ref{ley3}. Assume that $(s,p)$ satisfies \eqref{con1}. For small $t>0$, we have for some positive constants $c_1,C_1,C_2$ independent of $t,\ep$ and $\delta$
\begin{align*}
\delta t2^n\f(c_1-C_1t2^{n}\g)\leq 2^{ns}\left\|\De_{n}\big(e^{t\ep\Delta}u_0-u_0\big)\right\|_{L^p}\leq  C_2\delta 2^nt.
\end{align*}
\end{proposition}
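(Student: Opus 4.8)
The plan is to estimate $2^{ns}\|\Delta_n(e^{t\ep\Delta}u_0 - u_0)\|_{L^p}$ from both sides by exploiting the key spectral localization: since $\Delta_n u_0 = u_0$ (the frequency of $u_0$ lives in the annulus $\{\frac{33}{24}2^n \le |\xi| \le \frac{35}{24}2^n\}$, see Remark \ref{not} and Lemma \ref{ley3}), the operator $\Delta_n$ acts as the identity on $u_0$. First I would write the heat semigroup action minus identity via the Duhamel/fundamental-theorem-of-calculus expansion
\begin{align*}
e^{t\ep\Delta}u_0 - u_0 = \int_0^t \ep\Delta\, e^{\tau\ep\Delta}u_0 \,\dd\tau = t\,\ep\Delta u_0 + \int_0^t \ep\Delta\big(e^{\tau\ep\Delta}u_0 - u_0\big)\,\dd\tau,
\end{align*}
which isolates the linear-in-$t$ main term $t\,\ep\Delta u_0$ and relegates the rest to a remainder. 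This is precisely the mechanism already displayed in the toy-model in Step 1 of the strategy section.

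For the \textbf{lower bound}, I would apply the reverse triangle inequality to the displayed identity, giving
\begin{align*}
2^{ns}\|\Delta_n(e^{t\ep\Delta}u_0-u_0)\|_{L^p} \ge t\,2^{ns}\|\Delta_n\,\ep\Delta u_0\|_{L^p} - \int_0^t 2^{ns}\|\Delta_n\,\ep\Delta(e^{\tau\ep\Delta}u_0-u_0)\|_{L^p}\,\dd\tau.
\end{align*}
The first term is the main contribution: since $\ep = 2^{-n}$ and $\Delta u_0$ is frequency-localized near $|\xi|\approx 2^n$, Bernstein's inequalities (Lemma \ref{lem2.1}) give $\|\Delta_n \Delta u_0\|_{L^p}\approx 2^{2n}\|\Delta_n u_0\|_{L^p}$, so that $2^{ns}\|\Delta_n\,\ep\Delta u_0\|_{L^p}\approx 2^{-n}\cdot 2^{2n}\cdot 2^{ns}\|\Delta_n u_0\|_{L^p}\approx 2^n\cdot\delta$ by \eqref{z3}; this yields the $c_1\delta t 2^n$ term. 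The integral remainder is controlled by noting that on the spectral support of $u_0$ the multiplier $\ep\Delta$ has size $\approx 2^{-n}\cdot 2^{2n}=2^n$, and the operator norm of $e^{\tau\ep\Delta}-\mathrm{Id}$ on this frequency band is $O(\tau\ep 2^{2n})=O(\tau 2^n)$; combining these and integrating in $\tau$ over $[0,t]$ produces the quadratic correction $-C_1\delta t^2 2^{2n}$, i.e. the $-C_1 t 2^n$ factor inside the parenthesis.

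For the \textbf{upper bound}, the argument is more direct: I would bound $\|\Delta_n(e^{t\ep\Delta}u_0-u_0)\|_{L^p}$ by writing $e^{t\ep\Delta}u_0-u_0=\int_0^t \ep\Delta e^{\tau\ep\Delta}u_0\,\dd\tau$ and estimating the integrand uniformly. On the frequency band of $u_0$ the symbol satisfies $|\ep|\xi|^2 e^{-\tau\ep|\xi|^2}|\lesssim \ep 2^{2n}\approx 2^n$ (with the exponential factor bounded by $1$), so Bernstein again gives $2^{ns}\|\Delta_n \ep\Delta e^{\tau\ep\Delta}u_0\|_{L^p}\lesssim 2^n\cdot 2^{ns}\|\Delta_n u_0\|_{L^p}\approx \delta 2^n$ uniformly in $\tau\in[0,t]$; integrating over $[0,t]$ yields $C_2\delta 2^n t$. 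The main obstacle I anticipate is making the remainder estimate in the lower bound fully rigorous --- specifically, controlling the operator $\Delta_n\ep\Delta(e^{\tau\ep\Delta}-\mathrm{Id})$ in $L^p$ uniformly. This requires a clean Fourier-multiplier bound showing that on the annulus where $\widehat{u_0}$ is supported, the symbol $\ep|\xi|^2(1-e^{-\tau\ep|\xi|^2})$ is bounded by $C\tau(\ep|\xi|^2)^2\approx C\tau 2^{2n}$, together with a multiplier theorem (or direct convolution-kernel estimate via the localized heat kernel) to pass from the symbol bound to the $L^p$ bound with a constant independent of $n$ and $p$; the frequency localization of $u_0$ is what keeps all these constants uniform and is the feature that makes the whole scheme work.
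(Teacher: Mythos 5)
Your proposal is correct and follows essentially the same route as the paper: the Newton--Leibniz identity $e^{t\ep\Delta}u_0-u_0=\int_0^t e^{\tau\ep\Delta}\ep\Delta u_0\,\dd\tau$, the reverse triangle inequality isolating the main term $t\,\ep\Delta u_0$, and Bernstein's inequality together with \eqref{z3} to evaluate it as $\approx\delta t2^n$. The only difference is in the remainder: where you anticipate needing a Fourier-multiplier bound for $e^{\tau\ep\Delta}-\mathrm{Id}$ on the annulus, the paper simply applies Bernstein to pull out $\ep\Delta\approx 2^n$ and then reuses its already-proven upper bound $2^{ns}\|\Delta_n(e^{\tau\ep\Delta}u_0-u_0)\|_{L^p}\le C\delta 2^n\tau$ (which rests only on the $L^p$-contractivity $\|e^{\ep\tau\Delta}f\|_{L^p}\le\|f\|_{L^p}$), so no multiplier theorem is needed.
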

\begin{proof}  By the Newton-Leibniz formula, one has
\bal\label{ml}
e^{t\ep\Delta}u_0-u_0=\int_0^t\f(e^{\tau\ep \Delta} \ep\Delta u_0\g) \dd\tau.
\end{align}
From \eqref{ml}
and using the classical $L^p$-$L^p$ estimate: $\|e^{\ep \tau\Delta}f\|_{L^p} \leq \|f\|_{L^p}$ with $1\leq p\leq \infty$, we obtain that for large $n$
\bbal
2^{ns}\big\|\De_{n}\big(e^{t\ep\Delta}u_0-u_0\big)\big\|_{L^p}
&\leq  t2^{ns}\big\|{\De}_{n}\f(\ep\Delta u_0\g)\big\|_{L^p}\leq  Ct\delta2^n.
\end{align*}
In fact, for large $n$, by Bernstein's inequality (see Lemma \ref{lem2.1}) and from \eqref{z3}, we have
$$2^{ns}\big\|{\De}_{n}\f(\ep\Delta u_0\g)\big\|_{L^p}\thickapprox 2^{-n}2^{2n}2^{ns}\big\|{\De}_{n} u_0\big\|_{L^p}\thickapprox  \delta2^n.$$
Combining the above, we obtain that for large $n$
\bbal
2^{ns}\big\|\De_{n}\big(e^{t\ep\Delta}u_0-u_0\big)\big\|_{L^p}
\geq&~  t2^{ns}\big\|{\De}_{n}\ep\Delta u_0\big\|_{L^p}-2^{ns}\int_0^t\left\|{\De}_{n}\ep\Delta\big(e^{\tau\ep\Delta}u_0-u_0 \big)\right\|_{L^{p}}\dd\tau\nonumber\\
\geq&~ ct2^{ns}2^n\big\|{\De}_{n}u_0\big\|_{L^p}-C2^n2^{ns}\int_0^t\left\|{\De}_{n}\big(e^{\tau\ep\Delta}u_0-u_0 \big)\right\|_{L^{p}}\dd\tau\nonumber\\
\geq&~ c_1t\delta2^n-C_1t^2\delta 2^{2n}\nonumber\\
\geq&~\delta t2^n\f(c_1-C_1t2^{n}\g).
\end{align*}
Thus we complete the proof of Proposition \ref{lmm1}.
\end{proof}
\subsection{Estimation of Error Terms}\label{sec3.3}
The following propositions involving the perturbation of solutions will play a crucial role in the proof of Theorem \ref{th2}.
\begin{proposition}\label{pr1} Let $u_0$ be given by Lemma \ref{ley3}. Assume that $(s,p)$ satisfies \eqref{con1}. For $t\in (0,1)$, we have for some positive constant $C$ independent of $t,\ep$ and $\delta$
\begin{align*}
&\left\|e^{t\ep\Delta}u_0\cd\na e^{t\ep\Delta}u_0-u_0\cd\na u_0\right\|_{\dot{B}^{s-1}_{p,\infty}}\leq C\delta^2,\\
&\left\|\left(e^{t\ep\Delta}-\mathrm{Id}\right)\left(u_0\cd\na u_0\right)\right\|_{\dot{B}^{s-1}_{p,\infty}}\leq C\delta^2.
\end{align*}
\end{proposition}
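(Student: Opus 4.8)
The plan is to obtain both bounds from three ingredients already available above: (i) the dyadic contraction of the heat semigroup, $\|e^{\tau\ep\Delta}\Delta_j f\|_{L^p}\le\|\Delta_j f\|_{L^p}$, which after multiplying by $2^{\sigma j}$ and taking the supremum gives $\|e^{\tau\ep\Delta}f\|_{B^\sigma_{p,\infty}}\le\|f\|_{B^\sigma_{p,\infty}}$ for every $\sigma$; (ii) the product estimate of Lemma \ref{cj}; and (iii) the norm equivalence $\|g\|_{B^{s-1}_{p,\infty}}\approx\|g\|_{L^p}+\|g\|_{\dot B^{s-1}_{p,\infty}}$, valid because \eqref{con1} forces $s-1>d/p\geq0$, so in particular $\|g\|_{\dot B^{s-1}_{p,\infty}}\le C\|g\|_{B^{s-1}_{p,\infty}}$. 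I first record, from Lemma \ref{ley3}, (i), and the embedding $B^{s}_{p,\infty}\hookrightarrow B^{s-1}_{p,\infty}$, the uniform bounds
\begin{align*}
\|u_0\|_{B^s_{p,\infty}}\thickapprox\delta,\quad \|e^{t\ep\Delta}u_0\|_{B^s_{p,\infty}}\le C\delta,\quad \|e^{t\ep\Delta}u_0-u_0\|_{B^{s-1}_{p,\infty}}\le 2\|u_0\|_{B^{s-1}_{p,\infty}}\le C\delta,
\end{align*}
with $C$ independent of $t,\ep,\delta$; the same argument gives $\|e^{t\ep\Delta}u_0-u_0\|_{B^{s}_{p,\infty}}\le 2\|u_0\|_{B^s_{p,\infty}}\le C\delta$.

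For the first inequality I split the bilinear difference into two pieces, each carrying a factor $e^{t\ep\Delta}u_0-u_0$:
\begin{align*}
e^{t\ep\Delta}u_0\cd\na e^{t\ep\Delta}u_0-u_0\cd\na u_0=(e^{t\ep\Delta}u_0-u_0)\cd\na e^{t\ep\Delta}u_0+u_0\cd\na(e^{t\ep\Delta}u_0-u_0).
\end{align*}
Passing to the homogeneous norm via (iii) and applying the $f\cd\na g$ estimate of Lemma \ref{cj} to each term yields
\begin{align*}
\f\|e^{t\ep\Delta}u_0\cd\na e^{t\ep\Delta}u_0-u_0\cd\na u_0\g\|_{\dot B^{s-1}_{p,\infty}}&\le C\|e^{t\ep\Delta}u_0-u_0\|_{B^{s-1}_{p,\infty}}\|e^{t\ep\Delta}u_0\|_{B^s_{p,\infty}}\\
&\quad+C\|u_0\|_{B^{s-1}_{p,\infty}}\|e^{t\ep\Delta}u_0-u_0\|_{B^s_{p,\infty}}.
\end{align*}
Since each of the four factors is $\le C\delta$ by the recorded bounds, the right-hand side is $\le C\delta^2$, giving the first claim.

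For the second inequality, the $f\cd\na g$ estimate of Lemma \ref{cj} first gives $\|u_0\cd\na u_0\|_{B^{s-1}_{p,\infty}}\le C\|u_0\|_{B^{s-1}_{p,\infty}}\|u_0\|_{B^s_{p,\infty}}\le C\delta^2$, and then (iii) together with the triangle inequality and (i) yields
\begin{align*}
\f\|\f(e^{t\ep\Delta}-\mathrm{Id}\g)(u_0\cd\na u_0)\g\|_{\dot B^{s-1}_{p,\infty}}\le C\|\f(e^{t\ep\Delta}-\mathrm{Id}\g)(u_0\cd\na u_0)\|_{B^{s-1}_{p,\infty}}\le C\|u_0\cd\na u_0\|_{B^{s-1}_{p,\infty}}\le C\delta^2.
\end{align*}

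The only genuine subtlety, and the step I would treat most carefully, is the passage from the homogeneous target norm to the nonhomogeneous product law of Lemma \ref{cj}: the quadratic quantities $u_0\cd\na u_0$ and the bilinear difference do acquire nontrivial low-frequency content through frequency interaction, so this transfer is not vacuous. It is licit precisely because $s-1>0$ under \eqref{con1}, which is exactly the condition making $\|\cdot\|_{\dot B^{s-1}_{p,\infty}}\le C\|\cdot\|_{B^{s-1}_{p,\infty}}$ valid. No smoothing of the heat semigroup beyond its uniform $L^p$-contraction is required, since the proposition only asserts a quadratic-in-$\delta$ bound that is uniform in $t$ and $\ep$; the decisive $t$- and $\ep$-dependence is instead furnished by the leading term in Proposition \ref{pr0}.
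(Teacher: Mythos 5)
Your proof is correct, and it reaches the stated $C\delta^2$ bounds by a route that is close to, but not identical with, the paper's. The paper exploits $\Div u_0=0$ (hence $\Div e^{t\ep\Delta}u_0=0$) to rewrite $u\cd\na u=\Div(u\otimes u)$, bounds $\|\Div F\|_{\dot{B}^{s-1}_{p,\infty}}\leq C\|F\|_{\dot{B}^{s}_{p,\infty}}\leq C\|F\|_{B^{s}_{p,\infty}}$, and then applies only the triangle inequality together with the Banach-algebra property of $B^{s}_{p,\infty}$ and the heat contraction to $e^{t\ep\Delta}u_0\otimes e^{t\ep\Delta}u_0$ and $u_0\otimes u_0$ \emph{separately} --- no telescoping at all. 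You instead telescope the bilinear difference and invoke the $f\cd\na g$ estimate of Lemma \ref{cj} at regularity $s-1$, which avoids the divergence-free structure entirely and is in fact sharper than needed: it would yield genuine smallness if $e^{t\ep\Delta}u_0-u_0$ were small in $B^{s-1}_{p,\infty}$, whereas the proposition only requires $O(\delta^2)$ uniformly in $t$ and $\ep$, so the cruder term-by-term bound suffices. Both arguments rest on the two facts you isolate: the uniform $L^p$-contraction of the heat semigroup on each dyadic block (so $\|e^{t\ep\Delta}f\|_{B^{\sigma}_{p,\infty}}\leq\|f\|_{B^{\sigma}_{p,\infty}}$), and the comparison $\|\cdot\|_{\dot{B}^{\sigma}_{p,\infty}}\leq C\|\cdot\|_{B^{\sigma}_{p,\infty}}$ for $\sigma>0$, which the paper uses at $\sigma=s$ and you at $\sigma=s-1$; both choices are legitimate under \eqref{con1} since $s-1>d/p\geq 0$. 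Your closing remark correctly identifies why no quantitative smoothing of the semigroup is needed here.
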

\begin{proof} Noticing that $\mathrm{div\,} u_0=0$ and using Lemmas \ref{cj}-\ref{lmm1}, we have
\bbal
\left\|e^{t\ep\Delta}u_0\cd\na e^{t\ep\Delta}u_0-u_0\cd\na u_0\right\|_{\dot{B}^{s-1}_{p,\infty}}
\leq&~ C\left\|e^{t\ep\Delta}u_0\otimes e^{t\ep\Delta}u_0-u_0\otimes u_0\right\|_{B^{s}_{p,\infty}}\\
\leq&~ C\left\|u_0\right\|^2_{B^{s}_{p,\infty}}\\
\leq&~ C\delta^2,
\end{align*}
and
\bbal
\left\|\left(e^{t\ep\Delta}-\mathrm{Id}\right)\left(u_0\cd\na u_0\right)\right\|_{\dot{B}^{s-1}_{p,\infty}}
\leq&~ C\left\|u_0\otimes u_0\right\|_{B^{s}_{p,\infty}}
\leq C\delta^2.
\end{align*}
This finishes the proof of Proposition \ref{pr1}.
\end{proof}

\begin{proposition}\label{pr2}
Let $u_0$ be given by Lemma \ref{ley3}. Assume that $(s,p)$ satisfies \eqref{con1}. For small $t>0$, we have for some positive constant $C$ independent of $t,\ep$ and $\delta$
\bbal
\left\|u^{\rm E}(t,u_0)-u_0+t\mathcal{P}\left(u_0\cd\na u_0\right)\right\|_{\dot{B}^{s-2}_{p,\infty}}\leq Ct^{2}\delta^3.
\end{align*}
\end{proposition}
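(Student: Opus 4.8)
The plan is to use the Duhamel representation \eqref{yh2} for the Euler flow to rewrite the quantity as a single time integral of a difference, and then to extract the two powers of $t$ from one explicit integration against a first-order-in-time consistency error. Writing $t\mathcal{P}(u_0\cd\na u_0)=\int_0^t\mathcal{P}(u_0\cd\na u_0)\,\dd\tau$ and subtracting \eqref{yh2}, I would first record the identity
\begin{align*}
u^{\rm E}(t,u_0)-u_0+t\mathcal{P}(u_0\cd\na u_0)=\int_0^t\mathcal{P}\big(u_0\cd\na u_0-u^{\rm E}\cd\na u^{\rm E}\big)\,\dd\tau,
\end{align*}
so that it suffices to bound the integrand in $\dot{B}^{s-2}_{p,\infty}$ by a quantity of size $\tau\delta^3$ and integrate.

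Next I would split the bilinear difference in the consistency form
\begin{align*}
u_0\cd\na u_0-u^{\rm E}\cd\na u^{\rm E}=(u_0-u^{\rm E})\cd\na u_0+u^{\rm E}\cd\na(u_0-u^{\rm E}).
\end{align*}
Since $u_0$ and $u^{\rm E}(\tau,u_0)$ are both divergence free, so is their difference, and hence each term is in divergence form, e.g. $(u_0-u^{\rm E})\cd\na u_0=\Div\big((u_0-u^{\rm E})\otimes u_0\big)$. The operator $\mathcal{P}\,\Div$ is a homogeneous Fourier multiplier of degree one, smooth away from the origin, so it maps $\dot{B}^{s-1}_{p,\infty}\to\dot{B}^{s-2}_{p,\infty}$ boundedly for every $p\in[1,\infty]$; combining this with the product law for $\dot{B}^{s-1}_{p,\infty}$ (legitimate since $s-1>d/p$, so that $B^{s-1}_{p,\infty}\hookrightarrow L^\infty$) yields
\begin{align*}
\big\|\mathcal{P}(u_0\cd\na u_0-u^{\rm E}\cd\na u^{\rm E})\big\|_{\dot{B}^{s-2}_{p,\infty}}\les \|u_0-u^{\rm E}\|_{\dot{B}^{s-1}_{p,\infty}}\big(\|u_0\|_{B^{s}_{p,\infty}}+\|u^{\rm E}\|_{B^{s}_{p,\infty}}\big).
\end{align*}

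The remaining ingredient is the first-order consistency estimate $\|u_0-u^{\rm E}(\tau,u_0)\|_{\dot{B}^{s-1}_{p,\infty}}\les\tau\delta^2$, which again comes from \eqref{yh2}. On the existence interval furnished by part (1), the uniform bound $\|u^{\rm E}(\tau,u_0)\|_{B^s_{p,\infty}}\les\|u_0\|_{B^s_{p,\infty}}\approx\delta$ (cf. Theorem \ref{th1}), together with Lemma \ref{cj}, the inequality $\|\cd\|_{\dot{B}^{s-1}_{p,\infty}}\les\|\cd\|_{B^{s-1}_{p,\infty}}$, and the boundedness of $\mathcal{P}$ on $\dot{B}^{s-1}_{p,\infty}$, gives $\|\mathcal{P}(u^{\rm E}\cd\na u^{\rm E})\|_{\dot{B}^{s-1}_{p,\infty}}\les\|u^{\rm E}\|^2_{B^s_{p,\infty}}\les\delta^2$; integrating $u_0-u^{\rm E}(\tau,u_0)=\int_0^\tau\mathcal{P}(u^{\rm E}\cd\na u^{\rm E})\,\dd\tau'$ then yields the claim. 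Inserting $\|u_0-u^{\rm E}\|_{\dot{B}^{s-1}_{p,\infty}}\les\tau\delta^2$ and $\|u_0\|_{B^s_{p,\infty}}+\|u^{\rm E}\|_{B^s_{p,\infty}}\les\delta$ into the bound above makes the integrand $\les\tau\delta^3$, and $\int_0^t\tau\delta^3\,\dd\tau\les t^2\delta^3$ closes the estimate.

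The main obstacle is the careful bookkeeping of derivatives and of the powers of $\delta$: one must lose exactly one derivative through $\mathcal{P}\,\Div$, rather than through a crude product bound that would cost a full derivative on each factor and break the target index $s-2$, which is precisely why the divergence-free structure of $u_0-u^{\rm E}$ is used to put the nonlinearity in divergence form. A secondary technical point is that $\mathcal{P}$ (and $\mathcal{P}\,\Div$) must be treated on \emph{homogeneous} Besov spaces at the endpoints $p\in\{1,\infty\}$, where it is bounded as a homogeneous degree-zero (resp.\ degree-one) multiplier even though it fails to be bounded on $L^1$ or $L^\infty$; one then controls homogeneous norms by the nonhomogeneous ones (valid here since $s-1>0$) in order to invoke the well-posedness bounds of part (1).
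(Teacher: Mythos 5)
Your proof is correct and follows essentially the same route as the paper's: the identical Duhamel/FTC identity reduces the claim to bounding $\int_0^t\mathcal{P}\left(u_0\cd\na u_0-u^{\rm E}_\tau\cd\na u^{\rm E}_\tau\right)\dd\tau$ in $\dot{B}^{s-2}_{p,\infty}$, the same first-order consistency bound $\|u^{\rm E}_\tau-u_0\|_{B^{s-1}_{p,\infty}}\les \tau\delta^2$ feeds the bilinear difference, and the divergence form together with the algebra property of $B^{s-1}_{p,\infty}$ closes the estimate exactly as in the paper. The one caveat is that your product estimate requires $\|u_0-u^{\rm E}_\tau\|_{L^\infty}$ and hence the \emph{nonhomogeneous} $B^{s-1}_{p,\infty}$ bound on the difference (obtained, as in the paper's intermediate step, by also controlling the low frequencies through $\dot{B}^{0}_{p,1}$), not merely the homogeneous bound $\|u_0-u^{\rm E}_\tau\|_{\dot{B}^{s-1}_{p,\infty}}\les\tau\delta^2$ that you state.
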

\begin{proof} From now on, we set $u^{\rm E}_t=u^{\rm E}(t,u_0)$ for simplicity. By \cite[Theorem 7.1]{B.C.D}, we know that there exists a positive time $T_1=T_1(\|u_0\|_{B^s_{p,\infty}})$ such that \eqref{2} has a unique solution $u^{\rm E}_t\in L^\infty_{T_1}B^s_{p,\infty}$ which satisfies
\bbal
\|u^{\rm E}_t\|_{L^\infty_{T_1}B^s_{p,\infty}}\leq C\|u_0\|_{B^s_{p,\infty}}\leq C\delta.
\end{align*}
By the Fundamental Theorem of Calculus, from \eqref{2}, one has
\bal\label{y}
u^{\rm E}_t-u_0=-\int_0^t\mathcal{P}\left(u^{\rm E}_\tau\cd\na u^{\rm E}_\tau\right)\dd\tau.
\end{align}
By Minkowski's inequality and Lemma \ref{cj}, we deduce that
\bal\label{z8}
\left\|u^{\rm E}_t-u_0\right\|_{B^{s-1}_{p,\infty}}
\leq&~ \int_0^t\left\|\mathcal{P}\left(u^{\rm E}_\tau\cd\na u^{\rm E}_\tau\right)\right\|_{B^{s-1}_{p,\infty}}\dd\tau\nonumber\\
\leq&~ \int_0^t\left\|\mathcal{P}\left(u^{\rm E}_\tau\cd\na u^{\rm E}_\tau\right)\right\|_{\dot{B}^{s-1}_{p,\infty}\cap\dot{B}^0_{p,1}} \dd\tau\nonumber\\
\leq&~ C\int_0^t\left\|u^{\rm E}_\tau\otimes u^{\rm E}_\tau\right\|_{\dot{B}^{s}_{p,\infty}\cap\dot{B}^1_{p,1}} \dd\tau\nonumber\\
\leq&~ C\int_0^t\left\|u^{\rm E}_\tau\otimes u^{\rm E}_\tau\right\|_{B^{s}_{p,\infty}} \dd\tau\nonumber\\
\leq&~ Ct\|u_0\|_{B^{s}_{p,\infty}}^2\nonumber\\
\leq&~ Ct\delta^2,
\end{align}
where we have used the boundedness of Riesz transforms in homogeneous Besov spaces and the fact that $B^s_{p,\infty}$ is a Banach algebra.

From \eqref{y}, it follows that
\bbal
u^{\rm E}_t-u_0+t\mathcal{P}\left(u_0\cd\na u_0\right)=\int_0^t\mathcal{P}\left(u_0\cd\na u_0-u^{\rm E}_\tau\cd\na u^{\rm E}_\tau\right)\dd\tau.
\end{align*}
Thus, we obtain from \eqref{z8} that
\bbal
\left\|u^{\rm E}_t-u_0+t\mathcal{P}\left(u_0\cd\na u_0\right)\right\|_{\dot{B}^{s-2}_{p,\infty}}
\leq&~ C\int_0^t\left\|u_0\cd\na u_0-u^{\rm E}_\tau\cd\na u^{\rm E}_\tau\right\|_{\dot{B}^{s-2}_{p,\infty}}\dd\tau\\
\leq&~ C\int_0^t\left\|u_0\otimes u_0-u^{\rm E}_\tau\otimes u^{\rm E}_\tau\right\|_{B^{s-1}_{p,\infty}}\dd\tau\\
\leq&~ C\int_0^t\left\|u^{\rm E}_\tau-u_0\right\|_{B^{s-1}_{p,\infty}}\f(\|u_0\|_{B^{s}_{p,\infty}}+\|u^{\rm E}_\tau\|_{B^{s}_{p,\infty}}\g)\dd\tau\\
\leq&~ Ct^2\delta^3.
\end{align*}
Then we complete the proof of Proposition \ref{pr2}.
\end{proof}
\begin{proposition}\label{pr3}
Let $u_0$ be given by Lemma \ref{ley3}. Assume that $(s,p)$ satisfies \eqref{con1}. For small $t>0$, we have for some positive constant $C$ independent of $t,\ep$ and $\delta$
\bbal
\left\|u^{\rm NS}_{\ep}(t,u_0)-e^{t\ep\Delta}u_0+\int_0^te^{(t-\tau)\ep\Delta}\mathcal{P}\left(e^{\tau\ep\Delta}u_0\cd\na e^{\tau\ep\Delta}u_0\right)\dd\tau\right\|_{\dot{B}^{s-2}_{p,\infty}}\leq Ct^{2}\delta^3.
\end{align*}
\end{proposition}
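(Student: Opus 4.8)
The plan is to follow the scheme of Proposition \ref{pr2}, replacing the Euler flow $u^{\rm E}$ by the viscous flow $u^{\rm NS}_{\ep}$ and absorbing the extra heat semigroup through its $L^p$-contraction. First I would subtract the Duhamel formula \eqref{yh1}: since
\[
u^{\rm NS}_{\ep}(t,u_0)-e^{t\ep\De}u_0=-\int_0^te^{(t-\tau)\ep\De}\mathcal{P}\f(u^{\rm NS}_{\ep}\cd\na u^{\rm NS}_{\ep}\g)\dd\tau,
\]
the quantity to be estimated is exactly
\[
\int_0^te^{(t-\tau)\ep\De}\mathcal{P}\f(e^{\tau\ep\De}u_0\cd\na e^{\tau\ep\De}u_0-u^{\rm NS}_{\ep}\cd\na u^{\rm NS}_{\ep}\g)\dd\tau,
\]
which reduces matters to controlling the bilinear difference inside the integral.

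Second, I would prove the viscous analogue of \eqref{z8}, namely the first-order bound
\[
\f\|u^{\rm NS}_{\ep}(\tau,u_0)-e^{\tau\ep\De}u_0\g\|_{B^{s-1}_{p,\infty}}\leq C\tau\delta^2
\]
uniformly in $\ep\in(0,1]$. Starting again from \eqref{yh1}, I would take the $B^{s-1}_{p,\infty}$-norm, pass through the intersection space $\dot{B}^{s-1}_{p,\infty}\cap\dot{B}^0_{p,1}$ on which both $\mathcal{P}$ and $e^{(\tau-\tau')\ep\De}$ act boundedly uniformly in $\ep$, and use the product law of Lemma \ref{cj} together with the uniform-in-$\ep$ bound $\|u^{\rm NS}_{\ep}(\tau,u_0)\|_{B^s_{p,\infty}}\leq C\delta$ coming from part (2) of Theorem \ref{th2}. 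This gives $\|\mathcal{P}(u^{\rm NS}_{\ep}\cd\na u^{\rm NS}_{\ep})\|_{B^{s-1}_{p,\infty}}\leq C\delta^2$, and integrating in time yields the claimed $C\tau\delta^2$.

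Third, applying the $L^p$-contraction of $e^{(t-\tau)\ep\De}$ and Minkowski's inequality to the integral identity obtained above, I would bound the target $\dot{B}^{s-2}_{p,\infty}$-norm by
\[
C\int_0^t\f\|e^{\tau\ep\De}u_0\cd\na e^{\tau\ep\De}u_0-u^{\rm NS}_{\ep}\cd\na u^{\rm NS}_{\ep}\g\|_{\dot{B}^{s-2}_{p,\infty}}\dd\tau.
\]
Setting $a=e^{\tau\ep\De}u_0$ and $b=u^{\rm NS}_{\ep}(\tau,u_0)$, both divergence-free, I would write the integrand as $\Div(a\otimes a-b\otimes b)$ and apply the algebra/product estimates exactly as in Proposition \ref{pr2} to bound it by $C\|a-b\|_{B^{s-1}_{p,\infty}}\f(\|a\|_{B^s_{p,\infty}}+\|b\|_{B^s_{p,\infty}}\g)$. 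The first-order bound from the previous step gives $\|a-b\|_{B^{s-1}_{p,\infty}}\leq C\tau\delta^2$, while $\|a\|_{B^s_{p,\infty}},\|b\|_{B^s_{p,\infty}}\leq C\delta$ (the former from the heat-semigroup contraction and \eqref{z3}); hence the integrand is $\leq C\tau\delta^3$ and integration over $[0,t]$ produces $Ct^2\delta^3$.

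The main obstacle, as in Proposition \ref{pr2}, is that the Leray projection $\mathcal{P}$ is bounded only on homogeneous Besov spaces, so each occurrence of $\mathcal{P}$ must be routed through $\dot{B}^{s-1}_{p,\infty}\cap\dot{B}^0_{p,1}$ rather than the nonhomogeneous $B^{s-1}_{p,\infty}$. The extra point specific to the viscous case is that every constant must be independent of $\ep$; this holds because the heat kernel has unit $L^1$-mass, so $e^{t\ep\De}$ is a contraction on each $L^p$ and therefore on each homogeneous or nonhomogeneous Besov space uniformly in $\ep$, and because the uniform $B^s_{p,\infty}$-bound on $u^{\rm NS}_{\ep}$ is precisely the viscosity-independent bound underlying part (2) of Theorem \ref{th2}.
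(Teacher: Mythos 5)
Your proposal is correct and follows essentially the same route as the paper: subtract the Duhamel formula, establish the first-order bound $\|u^{\rm NS}_{\ep}(\tau,u_0)-e^{\tau\ep\Delta}u_0\|_{B^{s-1}_{p,\infty}}\leq C\tau\delta^2$ (the paper's \eqref{LL}), then estimate the bilinear difference via the product law and the uniform-in-$\ep$ bounds to get the quadratic-in-$t$ gain. The only cosmetic difference is that the paper writes the factor as $\|u_0\|_{B^{s}_{p,\infty}}+\|u^{\rm NS}_\tau\|_{B^{s}_{p,\infty}}$ where you use $\|e^{\tau\ep\Delta}u_0\|_{B^{s}_{p,\infty}}+\|u^{\rm NS}_\tau\|_{B^{s}_{p,\infty}}$, which is equivalent by the heat-semigroup contraction.
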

\begin{proof} From now on, we set $u^{\rm NS}_t=u^{\rm NS}_{\varepsilon}(t,u_0)$ for simplicity.  By \cite[Remark 2]{GZ}, we know  that there exists a positive time $T_2\geq T_1$ such that
\bbal
\|u^{\rm NS}_t\|_{L^\infty_{T_2}B^s_{p,\infty}}\leq C\|u_0\|_{B^s_{p,\infty}}\leq C\delta.
\end{align*}
By Duhamel's principle, from \eqref{1}, one has
\bal\label{l}
u^{\rm NS}_t=e^{t\ep\Delta}u_0-\int_0^te^{(t-\tau)\ep\Delta}\mathcal{P}\left(u^{\rm NS}_\tau\cd\na u^{\rm NS}_\tau\right)\dd\tau,
\end{align}
which gives that
\bal\label{LL}
\left\|u^{\rm NS}_t-e^{t\ep\Delta}u_0\right\|_{B^{s-1}_{p,\infty}}
\leq&~ C\int_0^t\left\|u^{\rm NS}_\tau\cd\na u^{\rm NS}_\tau\right\|_{B^{s-1}_{p,\infty}}\dd\tau\nonumber\\
\leq&~ C\int_0^t\left\|u^{\rm NS}_\tau\otimes u^{\rm NS}_\tau\right\|_{B^{s}_{p,\infty}}\dd\tau\nonumber\\
\leq&~ Ct\delta^2.
\end{align}
From \eqref{l}, we have
\bbal
u^{\rm NS}_t&-e^{t\ep\Delta}u_0+\int_0^te^{(t-\tau)\ep\Delta}\mathcal{P}\left(e^{\tau\ep\Delta}u_0\cd\na e^{\tau\ep\Delta}u_0\right)\dd\tau\nonumber\\
&=-\int_0^te^{(t-\tau)\ep\Delta}\mathcal{P}\left(u^{\rm NS}_\tau\cd\na u^{\rm NS}_\tau-e^{\tau\ep\Delta}u_0\cd\na e^{\tau\ep\Delta}u_0\right)\dd\tau,
\end{align*}
from which and Minkowski's inequality, and using \eqref{LL}, we obtain that
\bbal
&\left\|u^{\rm NS}_t-e^{t\ep\Delta}u_0+\int_0^te^{(t-\tau)\ep\Delta}\mathcal{P}\left(e^{\tau\ep\Delta}u_0\cd\na e^{\tau\ep\Delta}u_0\right)\dd\tau\right\|_{\dot{B}^{s-2}_{p,\infty}}\\
\leq&~ C\int_0^t\left\|\mathcal{P}\left(u^{\rm NS}_\tau\cd\na u^{\rm NS}_\tau-e^{\tau\ep\Delta}u_0\cd\na e^{\tau\ep\Delta}u_0\right)\right\|_{\dot{B}^{s-2}_{p,\infty}}\dd\tau\\
\leq&~ C\int_0^t\left\|u^{\rm NS}_\tau-e^{\tau\ep\Delta}u_0\right\|_{B^{s-1}_{p,\infty}} \f(\|u_0\|_{B^{s}_{p,\infty}}+\|u^{\rm NS}_\tau\|_{B^{s}_{p,\infty}}\g)\dd\tau\\
\leq&~ Ct^2\delta^3.
\end{align*}
Then we complete the proof of Proposition \ref{pr3}.
\end{proof}
\subsection{Non-Convergence}\label{sec3.4}
In this subsection, we shall prove the non-convergence of the Navier-Stokes equations toward the Euler equations in the inviscid limit under the framework of endpoint Besov spaces $B^{s}_{p,\infty}$.

From \eqref{yh1} and \eqref{yh2}, it follows that
\bal\label{fj}
u^{\rm NS}_t-u^{\rm E}_t&=\left(e^{t\ep\Delta}u_0-u_0\right)+\mathcal{E},
\end{align}
where
$$\mathcal{E}:=-\int_0^te^{(t-\tau)\ep\Delta}\mathcal{P}\f(u^{\rm NS}_{\tau}\cdot \nabla u^{\rm NS}_{\tau}\g)\dd\tau+\int_0^t\mathcal{P}\f(u^{\rm E}_{\tau}\cdot \nabla u^{\rm E}_{\tau}\g)\dd\tau.$$
We decompose the error term $\mathcal{E}$ into four terms
\bbal
\mathcal{E}&=-\int_0^te^{(t-\tau)\ep\Delta}\mathcal{P}\left(e^{\tau\ep\Delta}u_0\cd\na e^{\tau\ep\Delta}u_0-u_0\cd\na u_0\right)\dd\tau\nonumber\\
&\quad-\int_0^t\left(e^{\tau\ep\Delta}-\mathrm{Id}\right)\mathcal{P}\left(u_0\cd\na u_0\right)\dd\tau+\int_0^t\mathcal{P}\f(u^{\rm E}_{\tau}\cdot \nabla u^{\rm E}_{\tau}-u_0\cd\na u_0\g)\dd\tau\nonumber\\
&\quad+\int_0^te^{(t-\tau)\ep\Delta}\mathcal{P}\left(e^{\tau\ep\Delta}u_0\cd\na e^{\tau\ep\Delta}u_0-u^{\rm NS}_{\tau}\cdot \nabla u^{\rm NS}_{\tau}\right)\dd\tau,
\end{align*}
or equivalently,
\bal\label{fj-1}
\mathcal{E}=\sum_{i=1}^4\mathbf{I}_i,
\end{align}
where
\bbal
&\mathbf{I}_1:=-\int_0^te^{(t-\tau)\ep\Delta}\mathcal{P}\left(e^{\tau\ep\Delta}u_0\cd\na e^{\tau\ep\Delta}u_0-u_0\cd\na u_0\right)\dd\tau,\\
&\mathbf{I}_2:=-\int_0^t\left(e^{\tau\ep\Delta}-\mathrm{Id}\right)\mathcal{P}\left(u_0\cd\na u_0\right)\dd\tau,\\
&\mathbf{I}_3:=-\left[u^{\rm E}_t-u_0+t\mathcal{P}\left(u_0\cd\na u_0\right)\right],\\
&\mathbf{I}_4:=u^{\rm NS}_t-e^{t\ep\Delta}u_0+\int_0^te^{(t-\tau)\ep\Delta}\mathcal{P}\left(e^{\tau\ep\Delta}u_0\cd\na e^{\tau\ep\Delta}u_0\right)\dd\tau.
\end{align*}
From \eqref{fj} and \eqref{fj-1}, using the triangle inequality and Proposition \ref{pr0}, we have
\bal\label{lf1}
\f\|u^{\rm NS}_t-u^{\rm E}_t\g\|_{{B}^{s}_{p,\infty}}
&\geq 2^{ns}\f\|{\De}_n\left(e^{t\ep\Delta}u_0-u_0\right)\g\|_{L^p}-2^{ns}\sum_{i=1}^4\f\|{\De}_n\mathbf{I}_i\g\|_{L^p}\nonumber\\
&\geq \delta2^{n}t(c-Ct2^n)-C2^{n}\f(\f\|\mathbf{I}_1\g\|_{\dot{B}^{s-1}_{p,\infty}}+\f\|\mathbf{I}_2\g\|_{\dot{B}^{s-1}_{p,\infty}}\g)\nonumber\\
&\quad-C2^{2n}\f(\f\|\mathbf{I}_3\g\|_{\dot{B}^{s-2}_{p,\infty}}+\f\|\mathbf{I}_4\g\|_{\dot{B}^{s-2}_{p,\infty}}\g).
\end{align}
Using Proposition \ref{pr1}, we have
\bbal
\f\|\mathbf{I}_1\g\|_{\dot{B}^{s-1}_{p,\infty}}+\f\|\mathbf{I}_2\g\|_{\dot{B}^{s-1}_{p,\infty}}\leq Ct\delta^2.
\end{align*}
Using Propositions \ref{pr2}-\ref{pr3}, we have
\bbal
\f\|\mathbf{I}_3\g\|_{\dot{B}^{s-2}_{p,\infty}}+\f\|\mathbf{I}_4\g\|_{\dot{B}^{s-2}_{p,\infty}}\leq Ct^2\delta^3.
\end{align*}
Inserting the above into \eqref{lf1}, and picking $t=\delta2^{-n}$ with the above $\delta$, we deduce that for large $n$
\bbal
\f\|u^{\rm NS}_t-u^{\rm E}_t\g\|_{{B}^{s}_{p,\infty}}&\geq \delta 2^nt\f(c-Ct2^n\g)-C\delta^2t2^n-C\delta^3t^22^{2n}\\
&\geq \delta^2 \f(c-C\delta\g)-C\delta^3-C\delta^5.
\end{align*}
Then, choosing $\delta$ sufficiently small, we deduce that
\bbal
\f\|u^{\rm NS}_{\ep}(t,u_0)-u^{\rm E}(t,u_0)\g\|_{B^{s}_{p,\infty}}&\geq c_0\delta^2=:\eta_0>0.
\end{align*}
Thus we complete the proof of \eqref{impor} in Theorem \ref{th2}. Notice that $t\to 0^+$ and $\ep\to0^+$ as $n\to\infty$, one has
$$
\liminf_{\ep\downarrow 0}\f\|u^{\rm NS}_{\ep}(t,u_0)-u^{\rm E}(t,u_0)\g\|_{B^{s}_{p,\infty}}\geq  \eta_0>0,
$$
which implies the non-convergence of Theorem \ref{th2}. In fact, if $\lim\limits_{\ep\downarrow 0}\f\|u^{\rm NS}_{\ep}(t,u_0)-u^{\rm E}(t,u_0)\g\|_{L_T^\infty B^{s}_{p,\infty}}=0$ were true for any small $T$, this will lead to a contradiction.
{\hfill $\square$}

\section*{Acknowledgments}
The authors would like to express their gratitude to the anonymous referees for valuable suggestions and comments which greatly improved the paper.

\section*{Declarations}

\noindent\textbf{Availability of data and materials}\\ No data was used for the research described in the article.
\vspace*{1em}

\noindent\textbf{Conflict of interest}\\
The authors declare that they have no conflict of interest.
\vspace*{1em}

\noindent\textbf{Funding}\\
J. Li is supported by the National Natural Science Foundation of China~(No.12161004),
Innovative High end Talent Project in Ganpo Talent Program (No.gpyc20240069), Training Program for Academic and Technical Leaders of Major Disciplines in Ganpo Juncai Support Program (No.20232BCJ23009).

\end{document}